\newtheorem{theorem}{Theorem}[section]
\newtheorem{lemma}[theorem]{Lemma}
\newtheorem{proposition}[theorem]{Proposition}
\newtheorem{corollary}[theorem]{Corollary}
\theoremstyle{definition}
\newtheorem{definition}[theorem]{Definition}
\theoremstyle{remark}
\newtheorem{observation}[theorem]{Observation}
\numberwithin{figure}{section}
\numberwithin{table}{section}
\newcommand{\ra}{\rightarrow}
\newcommand{\hra}{\hookrightarrow}
\newcommand{\Teich}{Teichm\"uller }
\def\ra{{\rightarrow}}
\def\bZ{{\mathbb Z}}
\def\S1g{{\Sigma_{g,1}}}
\def\T1g{{\mathcal T}_{g,1}}
\def\M1g{{MC}_{g,1}}
\def\I1g{{\mathcal I}_{g,1}}
\newcommand\Aut{\operatorname{Aut}}
\newcommand\Pt{\mathfrak{Pt}(\S1g)}
\newcommand\Cs{\mathfrak{Cs}(\S1g)}
\newcommand\dFGC{\mathcal{\hat G}_T}
\newcommand\grdFGC{|\mathcal{\hat G}_T|}
\newcommand\tgrpd{\mathfrak{P}(|\mathcal{\hat G}_T|)}
\def\iprec{\curlyeqprec}
\def\ipred{\eqslantless}
\newcommand{\mb}{\mathbf}
\begin{document}

\title[Chord Diagrams and the Mapping Class Group]
{A Chord Diagrammatic Presentation of the Mapping Class Group of a Once Bordered Surface}

\author{Alex James Bene}
\address{Department of Mathematics\\
Aarhus University\\
DK-8000 Aarhus C, Denmark\\
~{\rm and}~Department of Mathematics\\
University of Southern California\\
Los Angeles, CA 90089\\
USA\\}
\email{bene{\char'100}math.usc.edu}

\keywords{mapping class groups, Ptolemy groupoid,   fatgraphs, ribbon graphs, chord diagrams }
\subjclass{Primary 20F38, 05C25; Secondary   20F34, 57M99,  32G15, 14H10,  20F99}

\begin{abstract}

The Ptolemy groupoid is a   combinatorial groupoid generated by elementary moves on marked trivalent fatgraphs with three types of relations.  Through the fatgraph decomposition of \Teich space, the Ptolemy groupoid is a mapping class group equivariant  subgroupoid of the fundamental path groupoid of \Teich space with a discrete set objects.  In particular, it leads to an infinite, but combinatorially simple, presentation of the mapping class group of an orientable surface.  

  In this note,  we give a presentation of a full mapping class group equivariant  subgroupoid of the Ptolemy groupoid of an orientable  surface with one boundary component in terms of marked linear chord diagrams, with chord slides as generators and  five types of relations.    We also introduce a dual version of this presentation which has  advantages for certain applications, one of which is given.  
\end{abstract}

\maketitle


\section{Introduction}

In \cite{penner}, Penner introduced a combinatorial presentation of the mapping class group of a punctured surface $\Sigma$ in terms of elementary moves, called Whitehead moves, on fatgraphs.  This lead to the introduction of the Ptolemy groupoid of $\Sigma$, a discrete combinatorial version of the fundamental path groupoid of the \Teich space of $\Sigma$.     The quotient of this groupoid under the action of the mapping class group gives a discrete combinatorial version of the (orbifold) fundamental path groupoid of the moduli space of (punctured) surfaces.  

Later, Morita and Penner \cite{moritapenner} used the notion of homology marked fatgraphs to describe the (discrete combinatorial version of the) fundamental path groupoid of the Torelli space of a once-punctured surface $\Sigma_{g,*}$, which is the quotient of the \Teich space of $\Sigma_{g,*}$ by the Torelli group, the subgroup of the mapping class group of $\Sigma_{g,*}$  which acts trivially on the homology of $\Sigma_{g,*}$.  Moreover, they gave a finite presentation of  this Torelli groupoid as well as  the first (infinite) presentations of the Torelli groups.  

One  advantage of these groupoid versions of the mapping class  and Torelli groups  is that they provide  an alternate avenue for defining and describing representations of these  groups.  Indeed, Morita and Penner were able to use the Torelli groupoid to give a simple combinatorial lift of the classical Johnson homomorphism;  as a consequence, this provided a new, easy, and direct proof of Morita's result that the this homomorphism lifted to the full mapping class group.  Following in this vein, \cite{bkp} used  groupoid versions of the Ptolemy and  Torelli groups of a bordered surface $\S1g$  in terms of $\pi_1$-marked and homology marked bordered fatgraphs to lift the higher Johnson homomorphisms to the groupoid level.  

In \cite{abp}, it was shown  that many other representations of the mapping class group $\M1g$ of a once bordered surface, i.e., a compact orientable surface with one boundary component, can be lifted to the groupoid level as well.  For example, the Nielsen embedding $N\colon \M1g\ra \Aut(F_{2g})$, thus also any representation which factors through it,  and the symplectic representation $\tau_0\colon \M1g\ra Sp(2g,\bZ)$ were lifted to groupoid representations $\tilde N$ and $\tilde \tau_0$ respectively.  These lifts depended on an algorithm, the greedy algorithm, which canonically constructed a maximal tree in a bordered fatgraph.  The lift $\tilde N$ had the nice property that  its basic form depended  only on six essential cases, two of which gave the identity in $\Aut(F_{2g})$; however, for some of these cases the explicit value of $\tilde N$ depended on the global combinatorics of the fatgraph in question.  

In describing these lifts, a connection between bordered fatgraphs and linear chord diagrams was introduced and exploited.  More specifically, the greedy algorithm provided a means to assign a linear chord diagram to each bordered fatgraph by ``collapsing''  the constructed maximal tree into a straight line.  Moreover, a correspondence between the elementary moves of fatgraphs, Whitehead moves, and the elementary moves of chord diagrams, chord slides, was established.    
  
  In this paper, we further explore the connection between bordered fatgraphs and chord diagrams.  In particular, we show that the groupoid generated by chord slides on certain linear chord diagrams gives another combinatorial version of the fundamental path groupoid of the \Teich space of a once bordered surface.  We moreover give a complete set of relations in this groupoid.   There seem to be several advantages of the chord diagrammatic viewpoint over the fatgraph one: first of all, the number of objects is considerably reduced, second of all, every elementary move has a simple yet nontrivial description in terms of the lift to $\Aut(F)$.  This may be advantageous in exploring lifts of the representation varieties.
  
 We introduce a dual version of chord diagrams which are more suited for a presentation of the Torelli groupoid.  As an application, we prove that the ``rational'' algorithm for the lift of the symplectic representation in \cite{abp} is in fact integral.
  
  To be as self-contained as possible, we shall repeat some of the results and terminology of \cite{abp}.

\section{Marked Bordered Fatgraphs}
\label{sect:markedborderedfatgraphs}

We begin by setting up some notation.
All graphs considered will be finite connected 1-dimensional CW-complexes.  We denote the set of oriented edges of a graph $G$ by $\mathcal{E}_{or}(G)$.
For an oriented edge $\mb{e}\in\mathcal{E}_{or}(G)$,  we let $\mb{\bar e}$ denote the same edge with the opposite orientation and let $v(\mb{e})$ denote the vertex that $\mb{e}$ points to.

A \emph{fatgraph} $G$ is a vertex-oriented graph, meaning there is   a  cyclic ordering assigned to the  oriented edges pointing to each vertex of $G$.  
 This additional  structure gives rise to certain cyclically ordered sequences of oriented edges called the \emph{boundary cycles} of $G$.  Specifically,  an oriented  edge $\mb{e}$ of a boundary cycle  is followed by the next edge in the cyclic ordering at $v(\mb{e})$, but with the orientation so  that it points away from $v(\mb{e})$.   In depicting a fatgraph, we will assume  the cyclic ordering at a vertex agrees with  the counterclockwise orientation of the plane so that  the boundary cycle of a depicted fatgraph $G$ can be thought of  as a path alongside  $G$ with  $G$  on the left.    
 
 We call any consecutive sequence of oriented edges in the boundary cycle of a fatgraph $G$ a \emph{sector}.  If $G$ is trivalent, there are three sectors naturally associated to each vertex of $G$, and there are four sectors associated to every edge of $G$.  We say that a fatgraph $G$ with $n$ boundary cycles has \emph{genus} $g$ if $\chi (G)=2-2g -n$.  For convenience, we shall consider only fatgraphs of a fixed genus $g$, unless otherwise stated.
 An isomorphism between two fatgraphs $G$ and $G'$ is a bijection of edges and vertices which preserves the incidence relations of edges with vertices and the cyclic ordering at each vertex.  We shall always regard isomorphic fatgraphs as equivalent.  

  We  now define our main combinatorial object:   A \emph{(once-)bordered fatgraph} is a fatgraph with only one boundary cycle such that all vertices are at least trivalent except for one which is univalent.   We call the edge incident to the univalent vertex the \emph{tail} and denote it by $t$.  Note that  an automorphism of a bordered fatgraph $G$  must map the tail to itself; thus, any automorphism of $G$ is necessarily trivial since any automorphism fixing an oriented edge of $G$  must also  fix all neighboring edges, thus the entire fatgraph.  

One consequence of the definition of a bordered fatgraph  is that it provides a natural linear ordering on its set of oriented edges.  This linear ordering $<$  can be defined by  setting $\mb{x}<\mb{y}$ if $\mb{x}$ appears before $\mb{y}$ while traversing the boundary cycle of $G$ beginning at the univalent vertex of the tail.    Since every  edge $e$ appears  in the boundary cycle once with each of its two orientations, we can define the \emph{preferred orientation} of $e$,  denoted simply by $\mb{e}\in\mathcal{E}_{or}(G)$, to be the one such that  $\mb{e}<\mb{\bar e}$.   Note that the tail with its preferred orientation is minimal in $\mathcal{E}_{or}(G)$, i.e., $\mb{t}\leq \mb{x}$ for all oriented edges $\mb{x}\in \mathcal{E}_{or}(G)$.

\subsection{Markings}

 Let $\S1g$ be a surface of genus $g$ with one boundary component and two fixed points $p\neq q\in\partial\S1g$ on the boundary.  We let $\pi_1$ denote the fundamental group of $\S1g$ with respect to the basepoint $p$,  and we let $H$ denote its abelianization.  
  The group  $\pi_1$ is non-canonically isomorphic to a free group $F_{2g}$ on $2g$ generators, and we shall often consider the boundary $\partial \S1g$ as a word in these generators under such an isomorphism.    By a classical result of Nielsen (see \cite{Zieschang}), the mapping class group $\M1g$  of $\S1g$  can be embedded  $N\colon \M1g \hra \Aut(F_{2g})$ in the automorphism group of $F_{2g}$ as the subgroup of all elements which preserve the word representing $\partial \S1g$.
 
 A \emph{marking} of a genus $g$ bordered fatgraph $G$ is an isotopy class of embeddings of $G$  into $\S1g$  as a spine so that the complement is contractible and the image of $v(\mb{\bar t})$ is the point $q$.    The mapping class group $\M1g$ acts freely on the set of all markings of a bordered fatgraph $G$.   Relying on the fact that dual to a marked bordered fatgraph is a triangulation of the surface $\S1g$ with all arcs based at the basepoint $p$, it was shown in \cite{bkp} that this notion of marking is equivalent to the following

\begin{definition}
A \emph{geometric $\pi_1$-marking} of a  bordered fatgraph $G$ is a map $\pi_1\colon \mathcal{E}_{or}(G)\ra \pi_1$ which satisfies the following compatibility conditions:
\begin{itemize}
\item {\bf (orientation)} For every oriented edge $\mb{e}\in \mathcal{E}_{or}(G)$,
\[
\pi_1(\mb{e}) \pi_1(\mb{\bar e})=1.
\]
\item {\bf (vertex)} For every vertex $v$ of $G$,
\[
\pi_1(\mb{e}_1) \pi_1(\mb{ e}_2) \cdots \pi_1(\mb{ e}_k)=1,
\]
where $\mb{e}_1, \ldots, \mb{e}_k$ are the cyclically ordered oriented edges incident on $v=v(\mb{e_i})$ for $i=1, \ldots, k$.
\item {\bf (surjectivity)} $\pi_1(\mathcal{E}_{or}(G))$ generates $\pi_1$.
\item {\bf (geometricity)} $
\pi_1(\mb{\bar t})=[\partial\S1g] $.
\end{itemize}
\end{definition}

Do to the equivalence of the two concepts, we shall not distinguish between markings and $\pi_1$-markings of a fatgraph.  We will also find it convenient to blur the distinction between  an oriented edge and its $\pi_1$-marking, and from now on we shall denote $\pi_1(\mb{e})$ simply by $\mb{e}$ and $(\pi_1(\mb{e}))^{-1}$  by $\mb{\bar e}$.

We now define the elementary move for trivalent fatgraphs.  
Given a trivalent bordered fatgraph $G$ and a non-tail edge $e$ of $G$, the \emph{Whitehead move} on $e$ is the  collapse of $e$ followed by the unique distinct expansion of the resulting four-valent vertex.  Note that any non-tail edge of $G$ necessarily has distinct endpoints since there is only one boundary cycle.

By the vertex compatibility conditions, it is easy to see that any $\pi_1$-marking  of  a bordered fatgraph $G$ evolves unambiguously under a Whitehead move.  Thus, there  is a natural partially defined composition on the set of Whitehead moves of marked bordered fatgraphs, where $W\colon G_0\ra G_1$ can be composed with $W'\colon G'_0\ra G'_1$ to obtain $W'\circ W$ if $G_1=G'_0$ as marked bordered fatgraphs.

\subsection{The Ptolemy groupoid}

It is well known that the  \Teich space of $\S1g$ admits an $\M1g$-equivariant ideal cell decomposition in terms of fatgraphs, the so called bordered fatgraph complex $\mathcal{G}_T$ of $\S1g$,  as developed  (first for punctured surfaces)  by Penner \cite{penner} in the hyperbolic setting and Harer-Strebel-Mumford \cite{harer} in the conformal setting.  See also \cite{Penner04} \cite{godin} for discussions of the bordered case.

For our purposes, it will be more convenient to work with the dual fatgraph complex $\dFGC$, which is an honest cell complex whose geometric realization $\grdFGC$ is homotopy equivalent to the \Teich space of $\S1g$.   The 2-skeleton of $\dFGC$ can be described quite concisely.  The  0-cells of $\dFGC$ correspond to trivalent marked bordered fatgraphs, and the oriented 1-cells correspond to the Whitehead moves between them.  The 2-cells correspond to fatgraphs which are the result of collapsing  two edges of a trivalent fatgraph  and come in two types depending on whether these two collapsed edges are adjacent or not.  

The mapping class group $\M1g$ acts freely on $\dFGC$.    As a result, any element of $\M1g$ can be represented by a path in $\mathcal{G}_T$ connecting 0-cells in the same $\M1g$ orbit, and these in turn can be represented by sequences of marked bordered fatgraphs beginning and ending at isomorphic  fatgraphs.   Moreover,  as  \Teich space   is known to be homeomorphic to a ball, each such path is unique up to homotopy;  thus, the element of $\M1g$ represented by a sequence of moves is trivial if and only if the sequence begins and ends with the same marked bordered fatgraph.

Recall that a groupoid is  a category whose morphisms are all isomorphisms.   By a \emph{full subgroupoid}, we shall simply mean a full subcategory of a groupoid.  Thus by definition, the subgroupoids of a groupoid $\mathfrak{G}$ are in one-to-one correspondence with the subsets of objects of $\mathfrak{G}$.  

A groupoid can also be viewed as a set of partially composable elements which satisfy conditions analogous to those which define a group.  In this way, it makes sense to speak of presenting a groupoid in terms of generators and relations.  In other words, a generating set of a groupoid is a set of morphisms  from which any morphism can be constructed by composition, and a set of relations is a set $\mathcal{R}$ of compositions of generators such that any  trivial morphism can be written as a composition of elements of $\mathcal{R}$.  

Our primary example of a groupoid is the fundamental path groupoid $\mathfrak{P}(X)$ of a topological space $X$.  The objects and morphisms of $\mathfrak{P}(X)$ are  in one-to-one correspondence with the points of $X$ and homotopy classes of paths between them, and we shall sometimes blur this distinction.    More precisely, we take the  objects of $\mathfrak{P}(X)$  to be the fundamental groups $\pi_1(X,p)$  based at points $p\in X$ and the morphisms to be the isomorphisms $\pi_1(X,p)\cong\pi_1(X,q)$ given by conjugation by a path connecting $p$ and $q$.   

 We now make a definition:
\begin{definition}
Let the Ptolemy groupoid $\Pt$ be the  full subgroupoid of the fundamental path groupoid $\tgrpd$ of $|\mathcal{\hat G}_T|$ whose objects (points) are  the images of 0-cells of $\dFGC$ (i.e., trivalent marked bordered fatgraphs) in $|\mathcal{\hat G}_T|$. 
\end{definition}

Note that $\Pt$ is an $\M1g$-equivariant subgroupoid of $\tgrpd$ in the sense that the full $\M1g$-orbit of any object  of $\Pt$ is contained in the objects of $\Pt$, and the action of $\M1g$ is natural with respect to  the morphisms of $\Pt$.  The fact that $\Pt$ is a full subcategory means that for every two objects of $\Pt$, the set of morphisms between them in $\Pt$ is  the full set of morphisms between them in the path groupoid of  $|\mathcal{\hat G}_T|$.  In this way, one is assured that all  information of the mapping class group is captured in $\Pt$ at every point. 

Moreover, by the properties of $\dFGC$, the morphisms of $\Pt$ are generated by Whitehead moves, and the relations between these generators are described by the 2-cells of $\dFGC$.  Thus the groupoid $\Pt$ has the advantage over the full path groupoid of $|\mathcal{\hat G}_T|$ in that it has a discrete set of objects and morphisms  with simple combinatorial descriptions.  

We now describe the relations in $\Pt$ and begin by introducing some notation.  Given any non-tail edge $e$ of $G$, we let $W_e\colon G\ra G'$ be the Whitehead move on $e$ and we let $e'$ be the the unique new edge of $G'$ produced by this move (while identifying all other edges in $G$ and $G'$).  For any edge $e$ of a bordered fatgraph $G$, let $I_e\in \Pt$ be the sequence of a Whitehead move on $e$ followed by its inverse:  $I_e=W_{e'}\circ W_{e}$.  The \emph{involutivity relation} states the obvious fact that this sequence of moves is a relation in $\Pt$.   Given any two edges $e$ and $f$ of $G$ which are not adjacent, meaning they are not both incident to a common vertex,  we define the \emph{commutativity sequence} on $e$ and $f$ by  $C_{e,f}=W_{f'}\circ W_{e'}\circ W_{f}\circ W_{e}$.  Finally, if $e$ and $f$ are adjacent, we define the \emph{pentagon sequence} by $P_{e,f}=W_{e''}\circ W_{f'}\circ W_{e'}\circ W_{f}\circ W_{e}$.  See figure \ref{fig:14325}  One can check directly that under these sequences of moves, the initial and final marked bordered fatgraphs are isomorphic, therefore both $C_{e,f}$ and $P_{e,f}$ are trivial in $\Pt$.  We call   these the commutativity and pentagon relations respectively.

\section{The Greedy Algorithm}

In \cite{abp}, an algorithm was introduced to produce a maximal tree $T_G$ in a given bordered fatgraph $G$.  This algorithm, called the \emph{greedy algorithm}, works by building the tree $T_G$ one edge at a time while  traversing the boundary cycle starting at the tail.   Every edge is added to  $T_G$ as long as the resulting subgraph  is still a tree.  The advantage of this algorithm is that it is completely local in the sense that one can determine if an edge $e$  of $G$ is contained in $T_G$ by simply looking at the order in which the sectors associated  to $v(\mb{e})$ are traversed.  Put concisely, $e\in T_G$ if and only if $\mb{e}\leq \mb{x}$ for all  $\mb{x}\in\mathcal{E}_{or}(G)$ with $v(\mb{x})=v(\mb{e})$.  

We denote by $X_G$ the complement of $T_G$ in $G$.  The edges in this set are linearly ordered by $<$, and we let $\mb{X}_G$ denote the  linearly ordered subset of $\mathcal{E}_{or}(G)$ obtained by assigning to each edge in $X_G$ its preferred orientation.   We call $\mb{X}_G$ the set of generators of $G$.   The existence of $T_G$, or more to the point  $\mb{X}_G$, has immediate consequences as follows.  First of all, given a $\pi_1$-marking of $G$, the image of $\mb{X}_G$ under the map $\pi_1$ produces a set of generators, which we also denote by $\mb{X}_G$, for the free group $\pi_1$, thus also an explicit isomorphism $\pi_1\cong F_{2g}$.  Moreover, under a Whitehead move $W\colon G\ra G'$, the two isomorphisms provided by $G$ and $G'$ can be composed to give an isomorphism from $F_{2g}$ to $F_{2g}$,  i.e.\  an element $\tilde{N}(W)\in \Aut(F_{2g})$.  This map $\tilde N \colon \Pt \ra \Aut(F_{2g})$  is the groupoid lift of Nielsen's embedding as mentioned in the introduction (see \cite{abp}).

It was shown that these elements $\tilde{N}(W)\in \Aut(F_{2g})$ are computable from the initial fatgraph $G$ of the Whitehead move, and moreover, that they fall into 6 essential cases depending on the order of traversal of the sectors associated to the edge of the Whitehead move.  We depict these cases in Figure  \ref{fig:WCases} where we have numbered the sectors associated to a Whitehead move $G\ra G'$ according to their order of traversal, and we have used check marks to denote edges that must be generators and  question marks to denote edges that may be generators.    We say that  a Whitehead move $W$ is a \emph{type $k$} move if it or its inverse corresponds the the $k$th case  according to our labeling in this figure.

\begin{figure}[!h]
\begin{center}
\epsffile{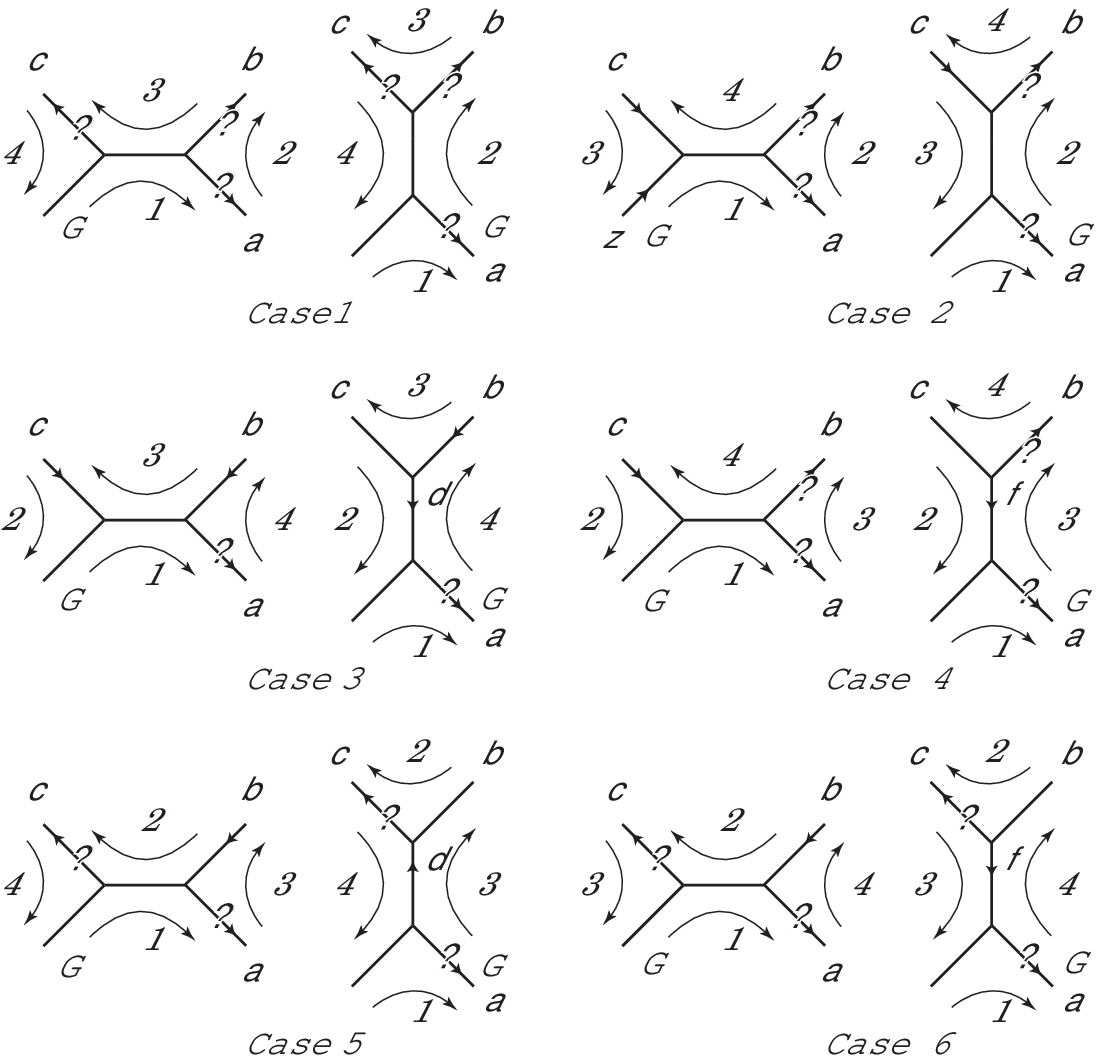}
\caption{Six cases of Whitehead moves $W\colon G\ra G'$}
\label{fig:WCases}
\end{center}
\end{figure}

\subsection{Essential cases of $\widetilde{N}(W)$}

We quickly recap the analysis of the six cases in \cite{abp}.   For moves of type 1 and type 2, it is easy to see that the corresponding elements of $\Aut(F_{2g})$ are the identity.  In fact, in \cite{abp} it was shown that the kernel of $\tilde{N}$ was generated by these two types of moves.

For a type 3 Whitehead move as in the figure, a  generator $\mb{c}$ gets mapped to a new generator $\mb{d}$ while all other generators are kept fixed.  By the vertex compatibility condition for $G'$, we have  $\mb{b}\mb{c}\mb{\bar d}=1$ so that  $\mb{c}\mapsto \mb{d}=\mb{ b}\mb{c}$.  

For case 4, the situation is similar to case 3 except that $\mb{b}$ need not be a generator, and we have the slightly different relation $\mb{c}\mapsto \mb{f}=\mb{\bar b}\mb{c}$.  

For a type 5 move, one can again use the vertex compatibility relation to find an expression for the new generator.  For the Whitehead move in the figure, the generator $\mb{b}$ is removed from the generating set, while  the new generator is given by  $\mb{c}\mb{\bar b}$.  However, in this case, it is not necessarily true that the element of $\Aut(F_{2g})$ is given by $\mb{b}\mapsto\mb{c}\mb{\bar b}$.  In particular,  if there are generators $\{\mb{x_i}\}_{i=\ell}^m\subset \mb{X}_G$ with $\mb{b}<\mb{x}_i<\mb{\bar b}$, then the ordering of the  generators $\mb{x}_\ell,\ldots,\mb{x}_m,\mb{c}\mb{\bar b}\in\mb{X}_{G'}$   will be cyclically permuted as well. 
 Also, as in case 4, one may need to write $\mb{c}$ in terms of the generators  of $G$ in order to write an explicit element of $\Aut(F_{2g})$.

For the Whitehead move of type 6  we have a situation which is essentially identical to that of case 5 except that now the new generator $\mb{f}$ has an orientation which is opposite that of the generator $\mb{d}$ of case 5. 

\section{Chord Diagrams}

We define a \emph{linear chord diagram} $C$ with $k$ chords to be a graph immersed in the plane with only double point singularities, such that the image consists of the   interval $[0,2k]$, called the \emph{core} of $C$,  together with $k$ immersed line segments,  called the \emph{chords} of $C$, whose endpoints are attached to the core  at the integer points $\{1, \ldots, 2k\}$.  We shall often identify a chord diagram with its image.  From now on, all linear chord diagrams will simply be called chord diagrams.   If the chords of a diagram are oriented, we call it an \emph{oriented chord diagram}. 
 
  Due to the orientation of the plane, the underlying graph of every chord diagram $C$ can be endowed with a fatgraph structure,  and we denote this fatgraph by $G_C$.    Note that by our definition above, all chord diagrams come with a tail given by the interval $[0,1]$, which extends to the left of the diagram, and that all  vertices other than the one at  $(0,0)$ are trivalent (so that we do not consider the double points as vertices).  Also note that the rightmost point of the core of a diagram is not a vertex of $G_C$; however, no harm is done if one  views it as a bivalent vertex, and we shall at times take this point of view.   We say that a linear chord diagram $C$ is a \emph{bordered chord diagram} if  the corresponding fatgraph $G_C$  is a (once-) bordered fatgraph, meaning it has only one boundary component.   We also define a marking of $C$  to simply be a marking of $C_G$. 
 
\begin{definition}
We define the marked chord groupoid $\Cs$  to be the full subgroupoid  of $\Pt$ whose  objects correspond to marked bordered  chord diagrams.
\end{definition}

Again, since $\Cs$ is a full $\M1g$-invariant  subcategory, it will contain all the relevant information of the mapping class group.  One of the main goals of this note is to give a concise combinatorial presentation of this groupoid, which essentially means giving a complete and efficient set of  generators and relations.  We begin with some notation.  
 
As with edges of a fatgraph, we shall let $\mb{c}$ and $\mb{\bar c}$ denote the two orientations of a chord $c$.  Additionally, we shall slightly abuse notation and  let $\mb{c}$ also denote the endpoint of $c$ that the oriented chord $\mb{c}$ points away from.  See figure \ref{fig:CS}.    
 Note that under  this notation, $\mb{c}$ will label the end  of $c$ which is incident  to $v(\mb{\bar c})$.  While slightly confusing, this notation will be convenient for the proof of Lemma 5.1.
 
  The linear left-to-right  ordering of points on the core gives rise to a linear ordering on the ends of the chords which we denote by $\prec$.  We also will write $\mb{c}\iprec\mb{d}$ if $\mb{c}$ immediately precedes $\mb{d}$ with respect to this ordering

\subsection{Chord slides}
We now define the elementary move for chord diagrams, the  \emph{chord slide}.   Assume that  $\mb{c}\iprec\mb{d}$. The slide of $\mb{c}$ along $\mb{d}$ moves the endpoint $\mb{c}$ of $c$ along the chord $d$ from $\mb{d}$ to its other end $\mb{\bar d}$ while keeping all other chord endpoints fixed.  If we denote by $\mb{c}'$ and $\mb{d}'$ the corresponding endpoints of chords of the new diagram, then we have  $\mb{\bar d'}\iprec \mb{c'}$.  See figure \ref{fig:CS}.  Similarly, the slide of $\mb{d}$ along $\mb{c}$ moves $\mb{d}$ so that $\mb{d'}\iprec\mb{\bar c'}$.  

\begin{figure}[!h]
\begin{center}
\epsffile{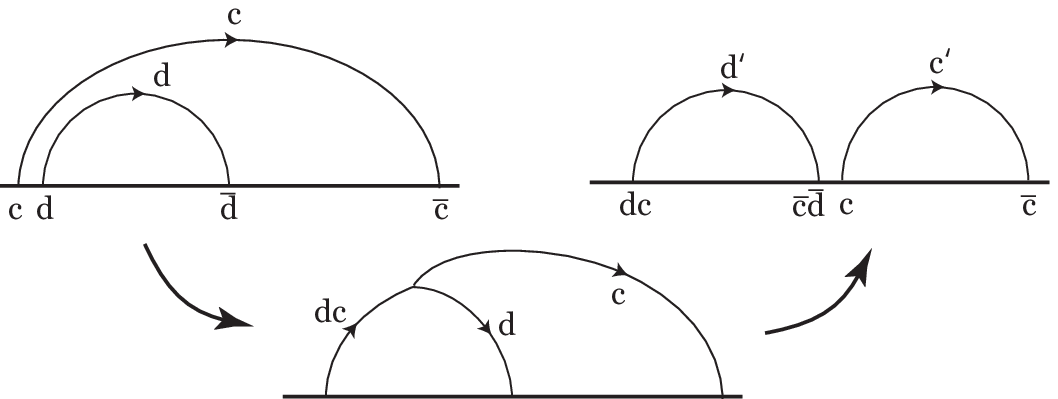}
\caption{Chord slide of $\mb{c}$ along $\mb{d}$.}
\label{fig:CS}
\end{center}
\end{figure}

\begin{observation}\label{obs:WCS}
Every chord slide of $C$ can be realized as a composition of two Whitehead moves on $G_C$, exactly one of which is of type 1 or 2. 
\end{observation}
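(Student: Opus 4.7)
\textit{Proof proposal.} The plan is to exhibit the two Whitehead moves explicitly and classify them against Figure \ref{fig:WCases}. Given a chord slide of $\mb{c}$ along $\mb{d}$ with $\mb{c}\iprec\mb{d}$, let $v_1=\mb{c}$ and $v_2=\mb{d}$ denote the two adjacent core vertices of $G_C$, $e$ the core edge between them, and $v_4=\mb{\bar d}$ the vertex at the other end of the chord $d$. I claim the slide is realized by the composition $W_{d}\circ W_e$: first the Whitehead move on $e$, then the Whitehead move on $d$ in the resulting intermediate fatgraph.

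The first move $W_e$ collapses $e$ to a 4-valent vertex whose CCW cyclic order is $(c,\,e_L,\,e_R,\,d)$, where $e_L$ and $e_R$ denote the core edges incident at $v_1, v_2$ on their outer sides. The unique alternative planar expansion pairs the two core edges at a new core vertex $u_1$ and the two chord edges at a new off-core vertex $u_2$, joined by a new edge $e'$. The resulting fatgraph $G'$ is not itself a chord diagram. In $G'$ the chord $d$ joins $u_2$ to $v_4$, and performing $W_d$ in the unique alternative way splits $u_2\cup v_4$ into two new core vertices $u_2',u_3$ joined by a new core-segment edge $d'$, in such a way that $e'$ becomes a chord from $u_1$ to $u_2'$ and the original chord $c$ becomes a chord from $\mb{\bar c}$ to $u_3$. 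Since $u_3$ sits on the core in the slot immediately to the right of what used to be $\mb{\bar d}$, the fatgraph $G''$ is precisely the chord diagram obtained from $C$ by sliding $\mb{c}$ along $\mb{d}$, as depicted in Figure \ref{fig:CS}.

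For the type classification, I use that the greedy tree of any bordered chord diagram is its entire core, so that every chord is a generator. One verifies directly that the greedy tree $T_{G'}$ is the core of $G'$ (the core of $G_C$ with $e$ deleted) together with the new edge $e'$, so the generating set of $G'$ is still the chord set $\{c,d,\ldots\}$ of $G_C$, with each generator represented by the same $\pi_1$-word. Hence $\tilde N(W_e)=\mathrm{id}$, and since by the analysis of Section 3 only moves of type 1 or type 2 give the identity in $\Aut(F_{2g})$, the move $W_e$ must be of one of these two types. On the other hand, $W_d$ replaces the generator formerly associated to $d$ (now represented by the chord $e'$ in $G''$) by a word involving both $c$ and $d$---the usual chord-slide substitution---so $\tilde N(W_d)\neq\mathrm{id}$ and consequently $W_d$ is of one of the remaining types 3, 4, 5, or 6.

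The main obstacle I anticipate is the cyclic-order bookkeeping needed to verify that $G''$ really is the slid chord diagram, and the identification of the greedy trees of $G'$ and $G''$ in the type classification. Both are mechanical once the local pictures of Figures \ref{fig:CS} and \ref{fig:WCases} are drawn.
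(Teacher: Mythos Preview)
Your decomposition $W_d\circ W_e$ is exactly what the paper does: Whitehead move on the core edge between $\mb{c}$ and $\mb{d}$, followed by the Whitehead move on the chord $d$ (and symmetrically with $c$ in place of $d$ for the other slide direction, which you omit but which goes through identically).

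Where you diverge from the paper is in the type classification. The paper simply says that one checks all possible orderings of the sectors adjacent to $c$ and $d$ and reads off the type of each move directly from Figure~\ref{fig:WCases}. You instead compute the greedy tree of the intermediate fatgraph $G'$, observe that the generating set (as $\pi_1$-elements) is unchanged, deduce $\tilde N(W_e)=\mathrm{id}$, and then invoke a converse: that only moves of type 1 or 2 yield the identity in $\Aut(F_{2g})$. This converse is not stated as such in the paper---what is stated is that the \emph{kernel} of $\tilde N$ is \emph{generated} by type 1 and 2 moves, which is weaker. The converse does follow from the explicit formulas in Section~3.1 (each of types 3--6 replaces a generator by a reduced word of length $\geq 2$, so the ordered generating set genuinely changes), but you should say so rather than assert it. Note also that $\tilde N$ depends on the \emph{ordered} generating set, so you need the ordering to be preserved as well, not just the underlying set.

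More to the point, verifying your greedy-tree claim for $G'$---that $T_{G'}$ is the old core together with $e'$---already requires tracking the boundary traversal near the modified region, and that traversal depends on the relative $<$-positions of $\mb{c},\mb{\bar c},\mb{d},\mb{\bar d}$. This is precisely the sector-ordering case check the paper performs. So your route is not really more conceptual; it repackages the same finite case analysis and then adds an extra inference through $\tilde N$. The paper's direct reading of the type from the sector order is shorter and avoids the need for the converse.
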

For example, if $\mb{c}\iprec\mb{d}$,   then  the slide of $\mb{c}$ along $\mb{d}$ is given by the Whitehead move on the edge of the core lying between $\mb{c}$ and $\mb{d}$  followed by the Whitehead move on $d$. See figure \ref{fig:CS}. Similarly, the slide of $\mb{d}$ along $\mb{c}$ is the composition of the Whitehead move on the edge of the core between them  followed by the Whitehead move on $c$.  The fact that one of the two Whitehead moves realizing a chord slide is of type 1 or 2 is easily verified by considering all possible cases  of orderings of  the sectors associated to the two chords of the chord slide.  

As a consequence of the above observation,  any marking of a chord diagram evolves unambiguously under a chord slide.  For example, in figure \ref{fig:CS}, we have the marking of $\mb{c}$ evolve as $\mb{c}\mapsto \mb{c'}=\mb{c}$ while the marking of $\mb{d}$ evolves as $\mb{d}\mapsto \mb{d'}=\mb{dc}$. 

Another consequence of Observation \ref{obs:WCS} is that  the set of chord slides on marked bordered chord diagrams is a subset of the morphisms of $\Cs$.  We will show in Theorem \ref{thm:generate} that this is in fact a generating set, meaning that for any two marked bordered chord diagrams, one can be taken to the other by a sequence of chord slides.

To get a feel for the evolution of markings under chord slides, we now consider the six cases, or types, of chord slides, which are analogous to the six types of Whitehead moves.  We depict these cases in figure \ref{fig:CsC}.  In the figures, we have used thick lines to denote segments of the core, while we have used thin lines to denote chords.  Also,  we have labeled the sectors of the chord diagram according to their order of  traversal along the boundary component, and we have oriented each chord according to its preferred orientation.
  
\begin{figure}[!h]
\begin{center}
\epsffile{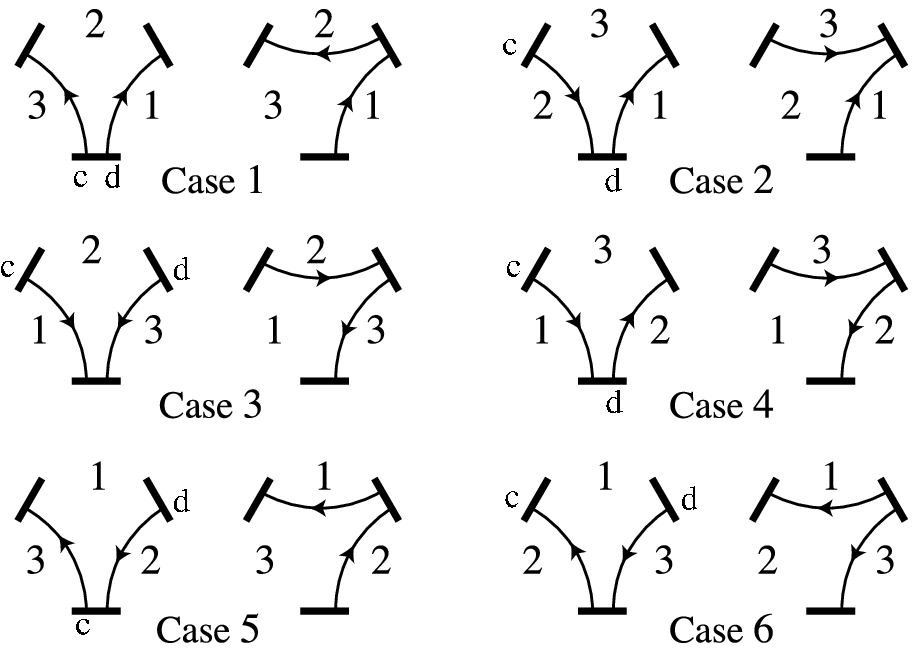}
\caption{Six cases of chord slides $W\colon G\ra G'$}
\label{fig:CsC}
\end{center}
\end{figure}
  
In each of the six cases, the chord labeled $c$ has one of its ends slid along one of the ends of the chord $d$.  The resulting effect for the set $\mb{X}_{G_C}$ is that the generator $\mb{d}$ is removed and a new generator is added in its place.   We find that the new generator is   $\mb{c}\mb{d}$,  $\mb{\bar c}\mb{d}$,   $\mb{d}\mb{c}$,  $\mb{\bar d}\mb{c}$,   $\mb{c}\mb{\bar d}$, or $\mb{d}\mb{\bar c}$ for  cases 1 through 6 respectively.   Also, there is the possibility that  the ordering of a subsequence of  generators of $\mb{X}_{G_C}$ are cyclically permuted in cases 3 through 6.  
 Note that all these expressions are relatively simple, completely explicit, and local, unlike the expressions for the six cases of Whitehead moves. 
    
\section{Fatgraph -- Chord Diagram Correspondence}  
 
 In the previous section, we showed that every linear chord diagram $C$ can be considered as a fatgraph, which we denoted by $G_C$.  For the remainder of this paper, we shall slightly abuse notation and simply denote the corresponding fatgraph by $C$.  
Conversely, in \cite{abp}, it was shown that for each bordered fatgraph $G$, there exists a sequence of moves of type 1 and 2 which takes $G$ to a new fatgraph, which we denote by $C_G$,  having  the form of a chord diagram.  We call this procedure  the \emph{branch reduction} algorithm and summarize some of its properties here.  

Given $G$, let $T_G$ be the maximal tree obtained via the greedy algorithm and let $S_G\subset T_G$ consist of those edges of $T_G$ that come before any generators with respect to the ordering $<$.  Note that $S_G$ must be connected.  By straightening $S_G$ to a horizontal  line, we can view $T_G$ as as a sequence of subtrees $T_i$  ``planted'' into $S_G$.  
  We call the edge of $T_i$ incident  to $S_G$ the \emph{trunk} of $T_i$.  It is easy to see that the Whitehead move on any  trunk is of type 1 or 2, so that it does not change the set of generators $\mb{X}_G$.  Moreover, under such a move, the length of $S_G$ is increased, so that by repeated applications, we arrive at a graph $C_G$ with $S_{C_G}=T_{C_G}$ and $\mb{X}_{C_G}=\mb{X}_G$.  This graph can be considered  an oriented chord diagram with oriented chords labeled  by $\mb{X}_G$ in a unique way.  In particular, we can unambiguously associate an edge of $G$ to every chord of $C_G$.   
   
\begin{figure}[!h]
\begin{center}
\epsffile{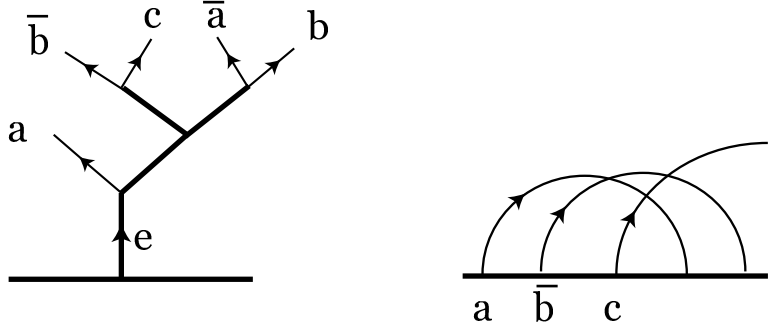}
\caption{Illustration of branch reduction.}
\label{fig:brred}
\end{center}
\end{figure}
  
  If $e$ is a trunk in $T_G$ for some $G$, then we denote by $T(\mb{e})$ the tree of which it is a trunk.  More generally, given any oriented edge $\mb{e}$ of $T_G$, we denote by $T(\mb{e})$ the subtree of $T_G$ rooted by $\mb{e}$ so that $\mb{e}$ points towards $T(\mb{e})$.  Note that either $T(\mb{e})$ or $T(\mb{\bar e})$ contains the tail of $G$.  Assume that $T(\mb{e})$ does not contain the tail.  Then  each univalent vertex  of $T(\mb{e})$ other than $v(\mb{\bar e})$ is adjacent to  two generators of $G$, and each bivalent vertex of $T(\mb{e})$ is adjacent to the end of one generator.  We call  these generators the \emph{leaves} of $T(\mb{e})$ and give them the orientation so that they point away from $T(\mb{e})$.  Note that there is a natural clockwise ordering to the leaves of  $T(\mb{e})$, and if $G$ is marked, then by repeated application of the vertex compatibility condition, the marking of $\mb{e}$ can be immediately read off as the product of these leaves in this ordering.  For example, the marking of the edge $\mb{e}$ in figure \ref{fig:brred} is given by $\mb{e}=\mb{a \bar b c \bar a b}$.
  Moreover,  under the branch reduction algorithm, the ordered leaves of $T(\mb{e})$  correspond to a sub-sequence of chord ends of $C_G$ in such a way that the ordering of the leaves coincides with the ordering given by $\prec$.
  
  Conversely,  assume $\mb{x}_1\iprec\mb{x}_2\iprec \dots \iprec\mb{x}_k$ are ends of chords such that $\mb{x}_k<\mb{x}_i$ for $1\leq i<k$.    Since $\mb{x}_k<\mb{x}_i$, a procedure opposite to the branch reduction algorithm may be applied to ``grow'' a planted tree $T$  from the core of $C$ using moves of  type 1 and 2 such that the  leaves of $T$ are $\{\mb{x}_1, \ldots, \mb{x_k}\}$ with this clockwise order.   
   
\subsection{The Whitehead move -- chord slide correspondence} \label{sect:WCScorrespondence}
 As we have seen, the branch reduction algorithm  provides  a map from marked bordered fatgraphs to marked chord diagrams.     Since both $\Pt$ and $\Cs$ are by definition full subcategories of the fundamental path groupoid of $\dFGC $, this defines a full and faithful functor from $\Pt$ to $\Cs$ which is the inverse of the inclusion $\Cs  \hra \Pt$.  Our next goal is to concisely describe the images of the morphisms of $\Pt$ under this functor.  We shall need the following lemma.
      
\begin{figure}[!h]
\begin{center}
\epsffile{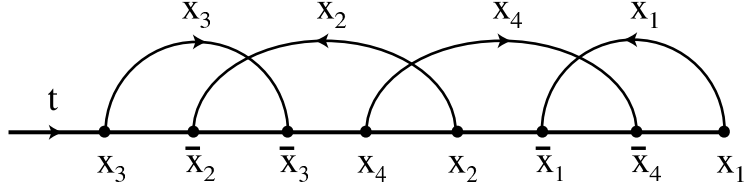}
\caption{Illustration of observation in proof of Lemma \ref{lem:unique}.}
\label{fig:obs}
\end{center}
\end{figure}

\begin{lemma}\label{lem:unique}
Given a set of generators $\mb{X}$ of $\pi$, there is at most one chord diagram $C$ whose set of generators $\mb{X}_C$ is equal to  $\mb{X}$ up to  permutation and replacement of any number of  elements $\mb{x}_i\in \mb{X}$ by their inverses $\mb{\bar x}_i$.
\end{lemma}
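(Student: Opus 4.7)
The plan is to reduce the uniqueness statement to the uniqueness of freely-reduced words in a free group. The key computation is a product formula for the marking of the tail: in a chord diagram $C$, letting $\ell_1,\ldots,\ell_{4g}$ be the sequence of chord end labels read off in the $\prec$-order along the core (each $\ell_i\in\mb{X}\cup\bar{\mb{X}}$, with each pair $\mb{x},\bar{\mb{x}}$ appearing once), I claim
\[
\mb{t}=\ell_1\ell_2\cdots\ell_{4g}\quad\text{in }\pi_1.
\]
This is the chord-diagram analogue of the leaf-product formula stated just before the lemma and is what Figure~\ref{fig:obs} is meant to illustrate. It follows by an induction along the core, combining the two-term vertex compatibility relation at the bivalent rightmost vertex with the three-term relation at each intermediate trivalent vertex: writing $E_i$ for the marking of the core edge from position $i$ to $i+1$, one obtains $E_{i-1}=\ell_i\cdot E_i$ at each step, which telescopes to the claimed formula.

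Combining this with the geometricity condition $\bar{\mb{t}}=[\partial\S1g]$ yields $\ell_1\ell_2\cdots\ell_{4g}=[\partial\S1g]^{-1}$, an equation in the free group $\pi_1\cong F(\mb{X})$ whose right-hand side is a fixed element independent of $C$. I would then observe that the word $\ell_1\cdots\ell_{4g}$ is freely reduced: if $\ell_{i+1}=\ell_i^{-1}$ for some $i$, the chord carrying these two labels would have its endpoints at consecutive core positions, and together with the intervening core edge it would bound a disk cutting off an additional boundary component---contradicting the bordered-ness of $C$. Since $\pi_1$ is free on $\mb{X}$, the reduced expression of any element is unique, so the sequence $\ell_1,\ldots,\ell_{4g}$ is determined by $\mb{X}$ and by the fixed element $[\partial\S1g]$; and since this sequence encodes the chord matching (each chord connects the positions labeled $\mb{x}$ and $\bar{\mb{x}}$), the diagram $C$ itself is uniquely determined.

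The main obstacle is carrying out the explicit vertex-by-vertex verification of the product formula, with careful bookkeeping of orientations and cyclic orders---in particular handling the base case at the bivalent rightmost vertex, which is not strictly covered by the general leaf-product formula quoted before the lemma. Once the formula is in hand, the remaining two steps (reducedness and uniqueness of reduced expressions) are straightforward applications of standard free-group facts.
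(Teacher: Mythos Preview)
Your proposal is correct and follows essentially the same route as the paper's proof: compute $\mb{t}$ as the left-to-right product of chord-end labels via vertex compatibility, observe this word is reduced because $C$ has a single boundary cycle, and invoke uniqueness of reduced words in a free group. Your write-up is somewhat more explicit than the paper's---you spell out the telescoping induction along the core and the geometricity step fixing $\mb{t}=[\partial\S1g]^{-1}$, and you give a concrete reason (the extra boundary component) for reducedness where the paper simply asserts it---but the argument is the same.
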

\begin{proof}
This lemma follows from some simple observations.  Let  $C$ be such a diagram.  By repeated application of the orientation and vertex compatibility conditions, the word representing $\mb{t}$ in the letters $\mb{X}_C$  can be directly computed  from the chord diagram $C$.  
Namely, by labeling the chord endpoints along the core of $C_G$,  $\mb{t}$ is obtained by simply multiplying these elements in their left-to-right ordering.    For example, in Figure \ref{fig:obs}, we have that $\mb{t}=\mb{x}_3\mb{\bar x}_2\mb{\bar x}_3\mb{x}_4\mb{x}_2\mb{\bar x}_1\mb{\bar x}_4\mb{x}_1$.  Moreover, the word representing $\mb{t}$ obtained in this way is reduced since the fatgraph $C$ has only one boundary cycle.
Since there is a unique reduced word representing any element of $\pi_1$ in a given set of letters, the lemma follows.
\end{proof}

As an immediate  consequence we have the following
\begin{corollary} \label{cor:settellsall}
The marked bordered chord diagram $C_G$ arising from a marked bordered fatgraph $G$ is uniquely determined by the (not necessarily ordered) set $\mb{X}_G$.
\end{corollary}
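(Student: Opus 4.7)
The statement is essentially a repackaging of Lemma \ref{lem:unique}, so my plan is to deduce it as an immediate corollary. The key input we already have in hand is that the branch reduction algorithm preserves the generating set: starting from a marked bordered fatgraph $G$, only Whitehead moves of type 1 and 2 are used, and these have been observed earlier to leave $\mathbf{X}_G$ invariant. Consequently, $\mathbf{X}_{C_G} = \mathbf{X}_G$ both as a set of oriented edges and as a subset of $\pi_1$.

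The plan is then to apply Lemma \ref{lem:unique} directly with $\mathbf{X} = \mathbf{X}_G$. Suppose $G$ and $G'$ are two marked bordered fatgraphs whose generating sets, viewed as (unordered) subsets of $\pi_1$, agree up to replacing some elements by their inverses. Then both $C_G$ and $C_{G'}$ are chord diagrams whose generating sets equal $\mathbf{X}_G$ up to permutation and inversion; by Lemma \ref{lem:unique} we conclude $C_G = C_{G'}$ as marked bordered chord diagrams.

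The only point that requires a small remark is the role of the preferred orientations. The set $\mathbf{X}_G$ of generators of a bordered fatgraph is defined using the preferred orientations coming from the linear ordering $<$ on $\mathcal{E}_{or}(G)$, and likewise for $\mathbf{X}_{C_G}$ using the linear ordering inherited from $C_G$. These two orderings need not assign the same preferred orientation to corresponding edges, but this is precisely the reason Lemma \ref{lem:unique} is formulated with the flexibility to replace any $\mathbf{x}_i$ by $\bar{\mathbf{x}}_i$. So no subtlety arises.

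I do not anticipate a real obstacle here; the work was all done in Lemma \ref{lem:unique}, and the corollary is just the observation that $C_G$ exists (via branch reduction) and that the lemma then forces uniqueness. The two-sentence proof is essentially: ``By branch reduction, $\mathbf{X}_{C_G} = \mathbf{X}_G$, and by Lemma \ref{lem:unique} a chord diagram is determined by its generating set up to permutation and inversion, so $C_G$ depends only on the set $\mathbf{X}_G$.''
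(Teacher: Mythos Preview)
Your proposal is correct and matches the paper's approach exactly: the paper states the corollary as ``an immediate consequence'' of Lemma~\ref{lem:unique} with no further proof, relying (as you do) on the earlier observation that branch reduction gives $\mb{X}_{C_G}=\mb{X}_G$. Your remark about preferred orientations is unnecessary caution, since the paper already asserts $\mb{X}_{C_G}=\mb{X}_G$ as oriented sets, but it does no harm.
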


  We are now ready to state the following
  \begin{theorem}\label{thm:generate}
  The morphisms of $\Cs$ are generated by chord slides.
  \end{theorem}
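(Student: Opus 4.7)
The plan is to bootstrap from the known generation of $\Pt$ by Whitehead moves, using the branch reduction algorithm to convert intermediate (non-chord-diagram) fatgraphs back into chord diagrams.

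Let $\phi: C\to C'$ be any morphism in $\Cs$. Since $\Cs$ is a full subgroupoid of $\Pt$, we may write $\phi = W_n\circ\cdots\circ W_1$ as a composition of Whitehead moves $W_i: G_{i-1}\to G_i$, with $G_0=C$ and $G_n=C'$. For each $i$, let $B_i: G_i\to C_{G_i}$ denote the branch reduction sequence, which consists entirely of type 1 and type 2 Whitehead moves. Since $G_0$ and $G_n$ are already chord diagrams, $B_0$ and $B_n$ are trivial. Inserting cancellation pairs $B_i^{-1}\circ B_i$, we decompose
\[
\phi \;=\; \alpha_n\circ\cdots\circ \alpha_1, \qquad \alpha_i \;:=\; B_i\circ W_i\circ B_{i-1}^{-1},
\]
so each $\alpha_i$ is a morphism in $\Cs$ from $C_{G_{i-1}}$ to $C_{G_i}$. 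It then suffices to realize each $\alpha_i$ as a chord slide (or the identity).

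The argument splits according to the type of $W_i$. If $W_i$ is of type 1 or type 2, then it fixes the generator set, so $\mb{X}_{G_{i-1}} = \mb{X}_{G_i}$, and by Corollary \ref{cor:settellsall} the marked chord diagrams $C_{G_{i-1}}$ and $C_{G_i}$ coincide. Because $|\dFGC|$ is (homotopy equivalent to) a ball, the loop $\alpha_i$ based at this object represents the trivial mapping class, hence is the identity morphism in $\Cs$. If $W_i$ is of type 3, 4, 5, or 6, the analysis in Section ``Essential cases of $\widetilde{N}(W)$'' shows that $W_i$ replaces exactly one generator by a product of the two generators incident at the vertex of the move, in one of six explicit forms. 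Comparing with the six explicit forms $\mb{cd},\,\mb{\bar c d},\,\mb{dc},\,\mb{\bar d c},\,\mb{c\bar d},\,\mb{d\bar c}$ produced by chord slides, we identify a chord slide $S_i$ on $C_{G_{i-1}}$ whose effect on the generator set matches that of $W_i$. By Corollary \ref{cor:settellsall}, $S_i$ terminates at $C_{G_i}$. Both $\alpha_i$ and $S_i$ are now morphisms between the same pair of objects, so by the ball property they agree in $\Cs$.

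The main obstacle is the last step in Case 2: verifying that the requisite chord slide actually exists on $C_{G_{i-1}}$. A chord slide of $\mb{c}$ along $\mb{d}$ requires the endpoints $\mb{c}$ and $\mb{d}$ to be adjacent with respect to $\prec$ on the core, and with orientations matching one of the six prescribed patterns. To establish this, I would examine the trivalent vertex $v$ of $G_{i-1}$ at which $W_i$ acts: its two non-tail, non-tree edges must both be generators in $\mb{X}_{G_{i-1}}$, and the greedy algorithm together with the branch reduction procedure (which builds the chord diagram by straightening the subtree $S_{G_{i-1}}$ and growing out the planted trees via type 1/2 moves) forces the chord ends corresponding to these two generators to be consecutive on the core of $C_{G_{i-1}}$ with the correct relative orientation. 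The type of the resulting chord slide (among the six possibilities) will exactly match the type of $W_i$, up to identifying inverse moves. Once this local matching is carried out case by case, the theorem follows.
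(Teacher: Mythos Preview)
Your overall strategy---factor $\phi$ into Whitehead moves, apply branch reduction to each intermediate fatgraph, and show each resulting $\alpha_i$ is a composition of chord slides---is exactly the paper's approach (this is the content of Lemma~\ref{lem:WmoveisCslide}).  The handling of types 1 and 2 is also correct.

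However, there is a genuine gap in your treatment of types 4, 5, and 6.  You claim that $W_i$ ``replaces exactly one generator by a product of the two generators incident at the vertex of the move'' and that ``its two non-tail, non-tree edges must both be generators.''  This is false in general.  In a type 4 move (and analogously in types 5 and 6), the edge labeled $\mb{b}$ in Figure~\ref{fig:WCases} carries a question mark: it may lie in the maximal tree $T_G$ rather than in $X_G$.  When that happens, $\mb{b}$ is not itself a generator but rather a product $\mb{b}=\mb{b}_1\cdots\mb{b}_m$ of the leaves of the subtree $T(\mb{b})$.  The Whitehead move then sends the generator $\mb{c}$ to $\mb{\bar b}\mb{c}=\mb{\bar b}_m\cdots\mb{\bar b}_1\mb{c}$, which is \emph{not} of the form produced by any single chord slide.

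The paper's Lemma~\ref{lem:WmoveisCslide} repairs this: after branch reduction one has $\mb{\bar c}\iprec\mb{b}_1\iprec\cdots\iprec\mb{b}_m$ in $C_G$, and sliding each $\mb{b}_i$ in turn along the single chord end $\mb{\bar c}$ effects exactly the replacement $\mb{c}\mapsto\mb{\bar b}\mb{c}$.  Thus $\alpha_i$ is in general a \emph{sequence} of chord slides along one chord, not a single slide.  Once you allow for this, the rest of your argument (matching targets via Corollary~\ref{cor:settellsall} and invoking simple connectivity) goes through unchanged.
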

Since $\Cs$ is a full subcategory of $\Pt$ and since  Whitehead moves generate   the morphisms of $\Pt$, this theorem follows immediately from the following lemma.    
  \begin{lemma} \label{lem:WmoveisCslide}
  For a Whitehead move $W_e\colon G\ra G'$ on an edge $e$ of $G$, let $\mathfrak{Cs}(W_e)\colon C_G\ra C_{G'}$  denote  the corresponding morphism in $\Cs$.  Then $\mathfrak{Cs}(W_e)$  is represented by  a sequence of chord slides along a single chord $\mb{x}$.  Moreover, considering  $x$ as an edge of $G$, $x$ is adjacent to $e$ in $G$.
  \end{lemma}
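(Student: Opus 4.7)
Because $\grdFGC$ is homotopy-equivalent to the contractible Teichm\"uller space of $\S1g$, any two paths in $\Pt$ with common endpoints are homotopic, so the morphism $\mathfrak{Cs}(W_e)$ is represented by \emph{any} sequence of chord slides from $C_G$ to $C_{G'}$. By Corollary \ref{cor:settellsall}, such a sequence terminates at $C_{G'}$ precisely when its cumulative effect on the generator set equals the transformation $\mb{X}_G \to \mb{X}_{G'}$ induced by $W_e$. The task thus reduces to constructing, for each of the six types of Whitehead move in Figure \ref{fig:WCases}, a sequence of chord slides in which every slide involves a common chord $\mb{x}$ coming from an edge of $G$ adjacent to $e$, and whose net effect on generators matches that prescribed by $W_e$.

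I would argue by case analysis on the type of $W_e$. Types 1 and 2 leave the generator set unchanged, so $C_G = C_{G'}$ by Corollary \ref{cor:settellsall} and the empty sequence works (any adjacent edge $x$ suffices). For types 3 and 4, the move replaces a single generator $\mb{c}$ by $\mb{b}^{\pm 1}\mb{c}$, with $b$ an edge of $G$ adjacent to $e$. In type 3, $b$ is itself a generator of $G$, so set $\mb{x}$ to be the chord for $b$ in $C_G$. A sequence of slides along $\mb{x}$ that first temporarily slides each chord with an endpoint strictly between $\mb{x}$ and $\mb{c}$ in the order $\prec$ past $\mb{x}$, then performs the crucial slide of $\mb{c}$ along $\mb{x}$ (using the appropriate case of Figure \ref{fig:CsC}), and finally slides those intermediate chords back past $\mb{x}$, changes only $\mb{c}$, replacing it by $\mb{b}^{\pm 1}\mb{c}$. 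In type 4, $b$ may itself not be a generator, but using the tree-leaf expansion of markings described in Section 4, one picks $\mb{x}$ to be a generator-chord adjacent to $e$ whose repeated use yields the same product.

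For types 5 and 6, the move deletes a generator $\mb{b}$ and installs $\mb{c}\mb{\bar b}^{\pm 1}$, simultaneously cyclically permuting those generators $\mb{x}_\ell,\ldots,\mb{x}_m$ whose chord endpoints lie between $\mb{b}$ and $\mb{\bar b}$ in the ordering. Take $\mb{x}$ to be the chord corresponding to $b$. Successively sliding each $\mb{x}_i$ past an end of $\mb{x}$ reproduces the cyclic permutation, and one further slide along $\mb{x}$ effects the substitution $\mb{b} \mapsto \mb{c}\mb{\bar b}^{\pm 1}$. The main obstacle is precisely this step: one must carefully track the chord-slide formulas from Section 4 to confirm that the intermediate $\mb{b}^{\pm 1}$ factors cancel in pairs (by involutivity) and that the resulting unordered generator set equals $\mb{X}_{G'}$. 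Once this is verified, Corollary \ref{cor:settellsall} identifies the terminal chord diagram with $C_{G'}$ and closes the argument.
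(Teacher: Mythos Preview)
Your overall strategy---case analysis on the six types, invoking Corollary \ref{cor:settellsall} to certify the endpoint---matches the paper's, but the execution contains a genuine error that stems from a misreading of how chord slides act on markings.  Recall from Section~4 that when an end of a chord $c$ is slid along a chord $d$, it is the generator $\mb{d}$ that is replaced (by $\mb{c}^{\pm1}\mb{d}^{\pm1}$) while $\mb{c}$ is unchanged.  In your type~3 argument you set $\mb{x}=b$ and then slide $\mb{c}$ along $\mb{x}$, claiming this ``changes only $\mb{c}$, replacing it by $\mb{b}^{\pm1}\mb{c}$''.  That is backwards: every slide along $\mb{x}=b$ alters the marking of $b$ and leaves $c$ fixed, so no sequence of slides along $b$ can realise the substitution $\mb{c}\mapsto\mb{bc}$ required by a type~3 move.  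The same confusion infects your type~5/6 sketch, where you slide the intermediate generators $\mb{x}_\ell,\dots,\mb{x}_m$ along $b$ expecting only a positional permutation; in fact each such slide multiplies $\mb{b}$ by some $\mb{x}_i^{\pm1}$, and moreover those $\mb{x}_i$ need not even be adjacent to $b$ in $C_G$, so the slides you describe may not be available.

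The paper's proof avoids all of this by exploiting a structural fact you do not use: the branch reduction algorithm guarantees concrete adjacencies in $C_G$.  For type~3 one has $\mb{\bar c}\iprec\mb{\bar b}$, so a \emph{single} slide of $\mb{\bar b}$ along $\mb{\bar c}$ (hence $\mb{x}=c$, not $b$) already replaces $\mb{c}$ by $\mb{bc}$.  For type~4 the leaves $\mb{b}_1,\dots,\mb{b}_m$ of $T(\mb{b})$ satisfy $\mb{\bar c}\iprec\mb{b}_1\iprec\dots\iprec\mb{b}_m$, and sliding each $\mb{b}_i$ in turn along $\mb{\bar c}$ accumulates to $\mb{c}\mapsto\mb{\bar b}\mb{c}$; again $\mb{x}=c$.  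Types~5 and~6 are handled analogously with $\mb{x}=b$, sliding the leaves of $T(\mb{c})$ (not the $\mb{x}_i$ you name) along $\mb{\bar b}$.  In short, the correct choice of chord is $c$ for types~3--4 and $b$ for types~5--6, and the adjacencies coming from branch reduction make the required slides immediate, with no back-and-forth.
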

  \begin{proof}
  Consider the Whitehead move $W_e$ on an edge $e$.  
  Without loss of generality, we assume that the first (of the four) sectors associated to the edge $e$ traversed by the boundary cycle includes the edge $e$ itself,  as is the case for the fatgraphs labeled $G$ in  Figure \ref{fig:WCases}.  If $W_e$ is of type 1 or 2, then there is no change to $\mb{X}_G$, so the chord diagrams $C_{G}$ and $C_{G'}$ coincide; i.e., they are related by zero chord slides. 
  
   Now consider   the type 3 move $W_e$ as labeled in Figure \ref{fig:WCases}.   By applying the branch reduction algorithm to $G$,  we have the relation $\mb{\bar c}\iprec \mb{\bar b}$  in $C_G$.  Under the Whitehead move $W_e$, we have that $\mb{X}_{G'}$ is obtained from $\mb{X}_G$ by replacing $\mb{c}$ with $\mb{bc}$.  On the other hand, if one performs the chord slide of $\mb{\bar b}$ along $\mb{\bar c}$ (this is the inverse of a type 2 chord slide as in figure \ref{fig:CsC}), one finds that the generating set of $\mb{X}_{C_G}=\mb{X}_g$ is altered by replacing $\mb{c}$ with $\mb{bc}$.  By Corollary  \ref{cor:settellsall}, we see that  the chord diagram obtained by this slide is precisely equal to $C_{G'}$.   Since there is a unique morphism in $\Pt$ connecting any two objects, the morphism of $W_e\in\Pt$ maps to the above described chord slide in $\Cs$.
  
  Now consider a type 4 move again labeled by Figure \ref{fig:WCases} where $\mb{b}$ may or may not be a generator.  One can check that the tree $T(\mb{ b})$ does not contain the tail (this is true for every oriented edge with a question mark in Figure \ref{fig:WCases}).   Let $\{\mb{b}_i\}_{i=1}^{m}$ be the set of leaves of the tree $T(\mb{ b})$, linearly ordered by $\prec$ so that in $C_G$ $\mb{\bar c}\iprec \mb{b}_1\iprec \dotsm \iprec \mb{b}_m$ (note that if $\mb{b}$ is a generator, then $T(\mb{b})=b$ so that  $m=1$ with $\mb{b_1}=\mb{b}$).  Sliding each $\mb{b}_i$ along $\mb{\bar c}$, one obtains a new chord diagram with generating set differing from $\mb{X}_{C_G}$ only in that $\mb{c}$ has been replaced by $\mb{\bar b}\mb{c}$ with $\mb{b}=\prod \mb{ b}_i$.  Thus, the effects of these chord slides match the effects of the Whitehead move $W_e$, so that the image of $W_e$ in $\Cs$ is given by a sequence of chord slides along $\mb{\bar c}$.  
  
  Completely analogous arguments show that the type 5 and 6 moves similarly map to sequences of chord slides over the single chord $\mb{\bar b}$ of $C_G$.  
  
The last statement follows since the edge $e$ is adjacent to the edges  $b$ and $c$.
\end{proof}
  
Note that this lemma implies that we have more than just  a functor $\mathfrak{Cs}\colon \Pt \ra \Cs$.  We  actually have defined a map   which takes a word in the generators of $\Pt$ to an explicit word in the generators of $\Cs$.  Moreover, this map  takes a composition of two Whitehead moves representing a chord slide to the same chord slide.  

We now address the question of what sequence of chord slides in a chord diagram $C$ can arise as $\mathfrak{Cs}(W)$ for a  single Whitehead move $W$ on some bordered fatgraph $G$ with $C_G=C$.  As we have just seen, all slides of $\mathfrak{Cs}(W)$ must be along a single chord $\mb{x}_0\in \mb{X}_G$.  Also,  it is clear that any chord slide of a single chord can be obtained by a single Whitehead move (see Observation \ref{obs:WCS}).  
  
  \begin{lemma}\label{lem:whichslides}
  If $\{\mb{x}_0, \mb{x}_1, \ldots, \mb{x_k}\}$ with $k>1$ are chord endpoints for some bordered chord diagram $C$  such that $\mb{x}_0\iprec\mb{x}_1\iprec \dots \iprec\mb{x}_k$, then there is a Whitehead move $W$ on some bordered fatgraph $G$ with $C_G=C$  such that $\mathfrak{Cs}(W)$ is the slide of $\{\mb{x}_1, \ldots, \mb{x_k}\}$  over $\mb{x}_0$ if and only if $\mb{\bar x}_0<\mb{x}_k<\mb{x}_i$ for all $0\leq i<k$.  Similarly, if $\mb{x}_1\iprec \dots \iprec\mb{x}_k\iprec\mb{x}_0$, then there is a Whitehead move $W$ on some $G$  with $\mathfrak{Cs}(W)$ the slide of  $\{\mb{x}_1, \ldots, \mb{x_k}\}$ over $\mb{x}_0$ if and only if $\mb{x_k}<\mb{x}_i$ for all $0\leq i<k$.
  \end{lemma}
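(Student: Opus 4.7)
The plan is to combine the tree-growing procedure (the converse of branch reduction) described just before Section \ref{sect:WCScorrespondence} with the $\mathfrak{Cs}$-image computations already carried out in the proof of Lemma \ref{lem:WmoveisCslide}. By that lemma, a single Whitehead move $W$ on a bordered fatgraph $G$ induces under $\mathfrak{Cs}$ a sequence of slides along a single chord, whose slid endpoints are precisely the clockwise-ordered leaves of a planted subtree $T(\mb{b}) \subset T_G$. So the current statement is really a matching question: given a specified consecutive list of chord endpoints of $C$, when can they be realized as such leaves in some $G$ with $C_G = C$, with the adjacent chord $x_0$ positioned so that the associated Whitehead move produces the prescribed slide?

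For the ``if'' direction in the first case, I would grow a planted tree $T$ into $C$ just to the right of $\mb{x}_0$ so that its leaves in clockwise order are $\mb{x}_1, \ldots, \mb{x}_k$; the tree-growing construction from the end of Section \ref{sect:WCScorrespondence} applies precisely when $\mb{x}_k < \mb{x}_i$ for $1 \leq i < k$, which is part of the hypothesis. Calling $G$ the resulting fatgraph, $b$ the trunk of $T$, and $e$ the core edge between $b$ and the endpoint $\mb{x}_0$, I would apply the six-case analysis of Lemma \ref{lem:WmoveisCslide} to $W_e$: since $e$ is adjacent to both $x_0$ and the trunk $b$, one of cases 3--6 applies, and $\mathfrak{Cs}(W_e)$ is precisely the slide of the leaves of $T(\mb{b})$ across $\mb{x}_0$. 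The remaining hypothesis $\mb{\bar x}_0 < \mb{x}_k$ would be shown to be equivalent to demanding that the greedy-algorithm conditions at $v(\mb{e})$ hold so that $W_e$ is indeed a move of the intended type, and not one producing slides over $\mb{\bar x}_0$ instead. For the second case (with $\mb{x}_0$ immediately after $\mb{x}_k$ on the core), the symmetric construction with the trunk planted just to the left of $\mb{x}_0$ goes through, and the analogue of the extra inequality is automatic from the placement of $\mb{x}_0$.

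For the ``only if'' direction I would run Lemma \ref{lem:WmoveisCslide} in reverse: since type 1 and 2 moves produce no slides, $W$ must fall into one of cases 3--6, and its slid endpoints must be the clockwise-ordered leaves of some planted $T(\mb{b})$ in $G$. The characterization $e \in T_G$ if and only if $\mb{e} \leq \mb{x}$ for all $\mb{x} \in \mathcal{E}_{or}(G)$ with $v(\mb{x}) = v(\mb{e})$ coming from the greedy algorithm then forces $\mb{x}_k$, which corresponds to the trunk side, to be the $<$-minimum of the leaves. A similar minimality check around the vertex of $e$ yields the additional $\mb{\bar x}_0 < \mb{x}_k$ in the first case. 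The main obstacle of the proof will be the careful bookkeeping of orientations across the six Whitehead-move cases, and in particular verifying the asymmetry between the two parts of the lemma: understanding why the extra condition on $\mb{\bar x}_0$ is genuinely needed when $\mb{x}_0$ is the leftmost endpoint but is redundant when $\mb{x}_0$ is the rightmost.
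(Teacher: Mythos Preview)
Your proposal is correct and follows essentially the same route as the paper's proof: grow a planted tree with the specified leaves using the procedure at the end of Section~\ref{sect:WCScorrespondence}, then perform the Whitehead move on the core edge adjacent to $\mb{x}_0$ and the trunk, and conversely read the inequalities off from the sector-traversal order dictated by the greedy algorithm.  The only refinement the paper makes over your sketch is that it pins down the specific Whitehead-move type in each part---type~4 in the first case and types~5 or~6 in the second---rather than leaving it as ``one of cases 3--6''; this is exactly the bookkeeping you flag as the main obstacle, and it is also what explains the asymmetry you note between the two parts.
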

  \begin{proof}For the first part, 
  assume  that $\mb{x}_0\iprec\mb{x}_1\iprec \dots \iprec\mb{x}_k$ and $\mb{\bar x}_0<\mb{x}_k<\mb{x}_i$ for $0\leq i<k$.    Since $\mb{x}_k<\mb{x}_i$ for $0\leq i<k$, we may grow  a planted tree $T$  from the core of $C$  such that the  leaves of $T$ are $\{\mb{x}_1, \ldots, \mb{x_k}\}$ with this clockwise order (see the  paragraph preceding  Section \ref{sect:WCScorrespondence}).   Since $\mb{\bar x}_0<\mb{x}_k<\mb{x}_i$ for $0\leq i<k$, the Whitehead move on the edge between $\mb{x}_0$ and the trunk of $T$ is of type 4, so that $T=T(\mb{b})$ and $\mb{x}_0=\mb{\bar c}$ in the notation of figure \ref{fig:WCases}.   One can verify that this Whitehead move corresponds to the desired slide.   
 
 Conversely, in order for  the chord endpoints $\{\mb{x}_1, \ldots, \mb{x_k}\}$ to all be slid along $\mb{x}_0$ under a single Whitehead move on a bordered fatgraph $G$, this move must be of type 4.   Again using the notation of figure \ref{fig:WCases},   the chord  endpoints $\{\mb{x}_1, \ldots, \mb{x_k}\}$ correspond to the leaves of the subtree $T(\mb{b})$, and  the $k+2$ sectors of $T(\mb{b})$ correspond to the sectors of $C$ which are to the right or left of one of the $\mb{x}_i$. From figure \ref{fig:WCases}, we see that the sector to the left of $\mb{x}_0$ must be the first to be traversed, and the sector to the right of $\mb{x}_k$ must be the next.  In terms of the ordering $<$, this translates to  $\mb{\bar{x}}_0 < \mb{x}_i$ for all $0\leq i \leq k$ and $\mb{x}_k< \mb{ x}_i$ for all $0\leq i<k$. 
 
  For the second part, a similar analysis shows that for a set of generators with $\mb{x}_1\iprec \dots \iprec\mb{x}_k\iprec\mb{x}_0$ and $\mb{x_k}<\mb{x}_i$ for all $0\leq i<k$, a planted tree can be built with leaves $\{\mb{x}_1, \ldots, \mb{x_k}\}$.  This proves the forward implication as above.  Conversely, a slide of $\{\mb{x}_1, \ldots, \mb{x_k}\}$ along $\mb{x}_0$ must arise from a move of type 5 or 6, in which case figure \ref{fig:WCases} shows that  the first sector to be traversed is the one to the left of $\mb{x}_0$.  Thus,  $\mb{\bar x}_0<\mb{x_k}<\mb{x}_i$ for all $0\leq i<k$, as desired.  
  \end{proof}
 
\section{Relations  in $\Cs$}

In the previous section, we proved that the groupoid $\Cs$ is generated by chord slides  and gave an explicit map $\mathfrak{Cs}\colon \Pt\ra\Cs$ which took a composition of generators of $\Pt$ to a  composition of generators of $\Cs$.   In this section, we give an explicit description of the relations satisfied by these generators.  

We begin by diagrammatically listing in Figure \ref{fig:relations}  the following relations:
\begin{figure}[!h]
\begin{center}
\epsffile{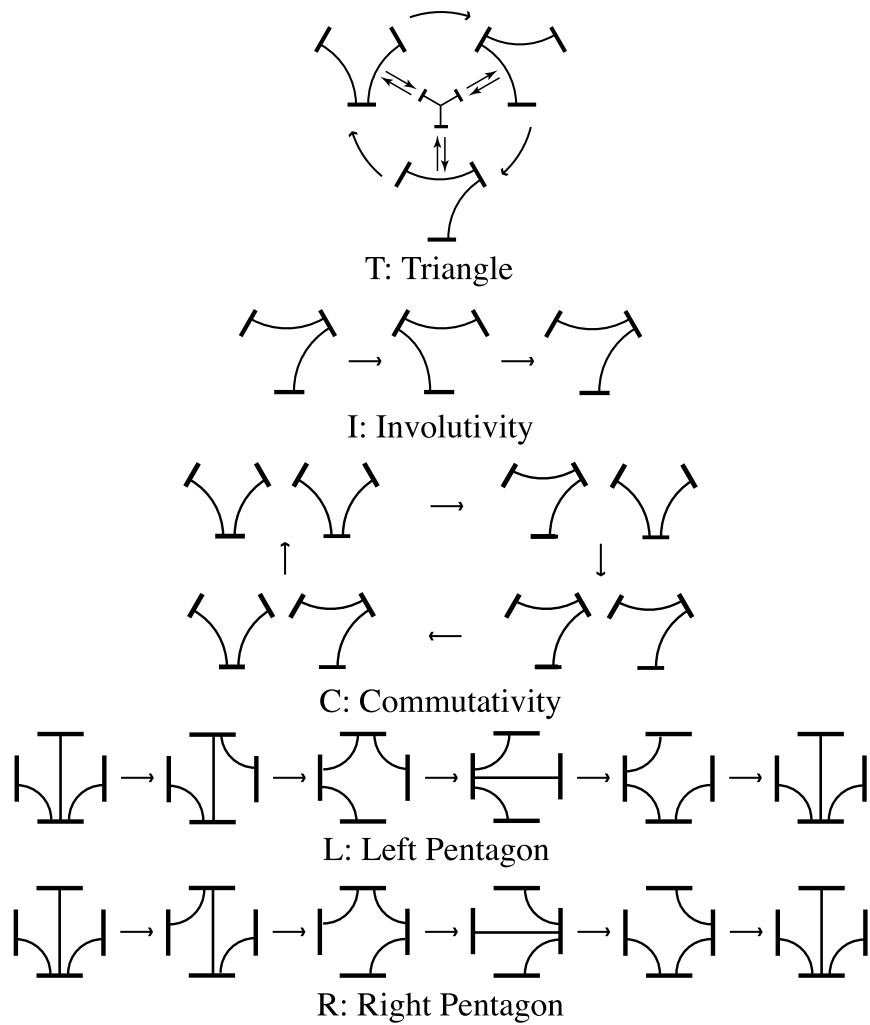}
\caption{Main relations}
\label{fig:relations}
\end{center}
\end{figure}

\begin{itemize}
\item {Triangle $T$:} This is perhaps the most surprising relation which says that three consecutive slides of two chords along each other is an identity.  
\item Involutivity $I$: This relation states the obvious fact that a chord slide followed by its  inverse  is the identity in $\Cs$.
\item Commutativity $C$:  This relation says that chord slides of two distinct ends along disjoint chords commute with each other.   
\item Left Pentagon $L$: This relation says that two ways to slide two adjacent chord endpoints to the left are equivalent.  
\item Right Pentagon $R$:  This relation says that two ways to slide two adjacent chord endpoints to the right are equivalent. 
\end{itemize}

Viewed as a sequence of 6 Whitehead moves, the relation $T$ follows from involutivity in $\Pt$, as illustrated in Figure \ref{fig:relations}.  The  relations $I$, $C$,  $L$, and $R$ can be verified similarly. 

\begin{figure}[!h]
\begin{center}
\epsffile{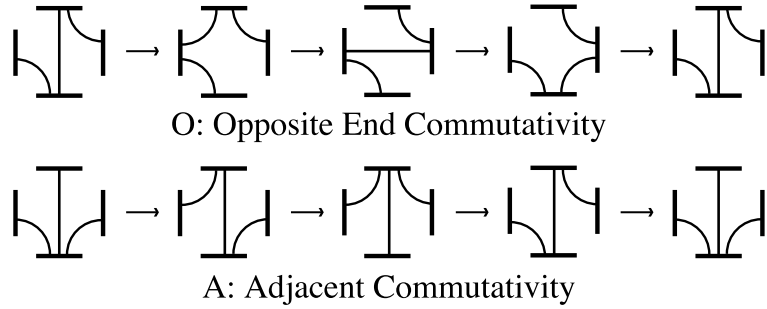} 
\caption{The $O$ and $A$ relations.}
\label{fig:oacomm}
\end{center}
\end{figure}

It will be helpful to also introduce the two relations  shown in figure \ref{fig:Ocomm}, which are similarly verified.
\begin{itemize}
\item Opposite End Commutativity $O$:  This relations says that the slides of opposite ends of the same chord commute with each other.

\item Adjacent Commutativity $A$:  This relation says that chord slides of two ends along the same chord (when possible) commute. 
\end{itemize}

\begin{lemma}
The relations $O$ and $A$ follow from the relations $T$, $L$, and $R$.  
\end{lemma}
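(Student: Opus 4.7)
The plan is to derive each of $O$ and $A$ as explicit compositions of (conjugates of) the relations $T$, $L$, and $R$, viewed as equations between sequences of chord slides in $\Cs$. Each of $O$ and $A$ is an equation of the form $s_2\circ s_1 = s_1'\circ s_2'$ where the two sides produce the same final marked chord diagram; to verify it we must realize the ``difference'' $(s_1'\circ s_2')^{-1}\circ (s_2\circ s_1)$ as a product of the allowed relations (noting that involutivity is automatic in any groupoid).

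For $A$, the two slides share the chord $d$ along which they slide. The main device is to use the triangle relation $T$ applied to $d$ together with a third auxiliary chord to insert a trivial three-slide cycle into one side of the proposed equation. Once inserted, the extra slides can be commuted into position using the left and right pentagon relations $L$ and $R$ (each of which allows one to rearrange two consecutive adjacent-endpoint slides in one direction along the core), and the cycle can then be collapsed via another application of $T$ to reveal the original two slides in the reversed order.

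For $O$, the two slides act at the opposite ends $\mb{d}$ and $\mb{\bar d}$ of a common chord $d$. Here I would use $T$ to trade a slide involving one end of $d$ for a two-slide expression involving the opposite end of $d$ (this is precisely what $T$ permits, since the triangle relates three slides that cyclically involve both ends of the pair of chords in question). After this rewriting both slides to be commuted act at the same end of $d$, reducing $O$ to the already-established $A$ relation; undoing the $T$-rewriting on the other side then yields the desired identity.

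The main obstacle will be the bookkeeping. Each of $T$, $L$, $R$ applies only in a specific local configuration of chord endpoints with prescribed orderings $\prec$, so at every intermediate step of the rewriting one must confirm that the current configuration really does satisfy those hypotheses, and that the slide one wishes to apply next is truly between adjacent (i.e., $\iprec$-consecutive) endpoints. The diagrams in Figure \ref{fig:oacomm} and Figure \ref{fig:relations} together provide the visual template for both $O$ and $A$, and the proof reduces to reading off a concrete sequence of $T^{\pm 1}$, $L^{\pm 1}$, $R^{\pm 1}$ insertions that matches this template while maintaining composability throughout.
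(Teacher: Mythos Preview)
Your plan is headed in the right direction, but it remains a sketch rather than a proof: you yourself note that ``the main obstacle will be the bookkeeping'' and that the argument ``reduces to reading off a concrete sequence,'' yet that sequence is never produced. For a lemma of this type, the content \emph{is} the explicit decomposition, so the proposal as written does not establish the result.

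The paper's argument is structurally different from your outline and considerably more uniform. Rather than treating $A$ first and then reducing $O$ to $A$, the paper handles $O$ and $A$ in parallel by the same device: in each case one takes a single $R$ relation as the outer boundary of a disk in the chord-slide complex and fills its interior with an $L$ cell, two $T$ cells, and the target relation ($O$, respectively $A$). Since $R$, $L$, and $T$ are already known to hold, the remaining cell is forced to be a relation as well. This is a two-cell decomposition argument (encoded entirely in Figure~\ref{fig:Ocomm}), and it avoids the inductive rewriting you propose.

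Your specific suggestion for $O$---apply $T$ to convert a slide at one end of $d$ into slides at the other end, thereby reducing to $A$---is not obviously correct as stated. The relation $T$ involves only two chords, whereas $O$ involves three (the central chord $d$ and the two chords along which its opposite ends slide). A single application of $T$ to $d$ and one of its neighbors will introduce slides of that neighbor, not merely relocate the end of $d$; extracting an instance of $A$ from the result would require further $L$/$R$ moves that you have not specified. It is plausible that some rewriting along these lines succeeds, but the paper's symmetric disk-filling avoids the issue entirely and uses exactly one $R$, one $L$, and two $T$ relations for each of $O$ and $A$.
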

\begin{proof}
The proofs of the two statements are similar and are 
essentially contained in figure \ref{fig:Ocomm}, where the outside ring of each diagram is an $R$ relation.  The inside of the first  is  decomposed  into an $L$ relation, two $T$ relations, and an $O$ relation. The second is decomposed  into an $L$ relation, two $T$ relations, and an $A$ relation. 
\end{proof}

\begin{figure}[!h]
\begin{center}
\epsffile{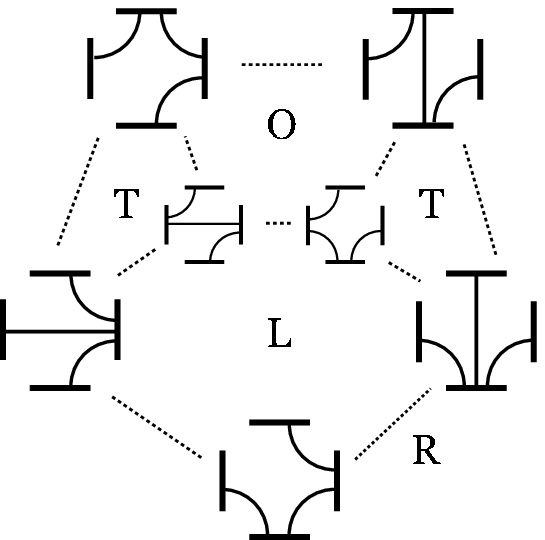} \quad
\epsffile{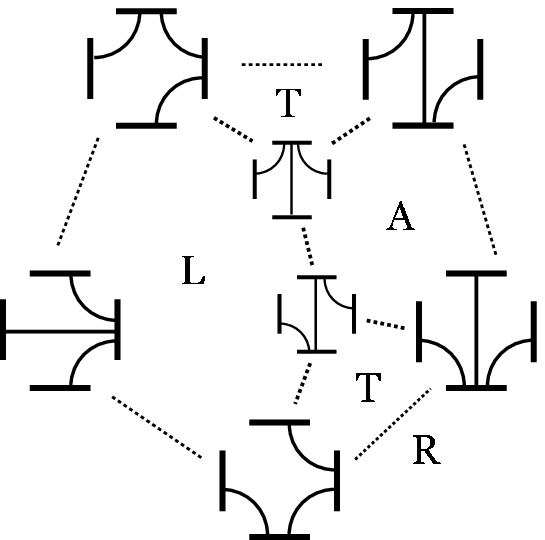}
\caption{Derivation of the $O$ relation.}
\label{fig:Ocomm}
\end{center}
\end{figure}

\begin{figure}[!h]
\begin{center}
\epsffile{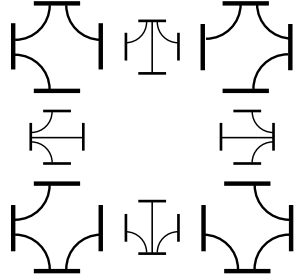}
\caption{The square relation.}
\label{fig:square}
\end{center}
\end{figure}

All of the above relations, with the exception of $T$,  also have versions where one or two  of the chords are replaced by several adjacent  chords which are simultaneously slid.   These multiple-chord slide relations all follow from the above by an easy application of induction.  We leave the details to the reader.  One can check directly that the multiple chord versions of $T$  no longer hold as relations.  However, while not needed for the sequel, we do mention that there are generalizations of the triangle relation $T$ which one might call the square, pentagon (not related to pentagon in $\Pt$), hexagon, etc.\ relations.  We depict the square relation in figure \ref{fig:square}.  These can be derived from the  relations $T$, $I$, $C$, $L$, and $R$, either directly as in the proof of the previous lemma, or by making use of the following theorem.   

\subsection{Main theorem}
\begin{theorem}
Any relation in $\Cs$ can be derived from the relations $T$, $I$, $C$,  $L$, and $R$.
\end{theorem}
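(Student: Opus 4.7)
The plan is to bootstrap from the known presentation of $\Pt$---generated by Whitehead moves with involutivity relations $I_e$, commutativity relations $C_{e,f}$, and pentagon relations $P_{e,f}$---via the functor $\mathfrak{Cs}\colon\Pt\to\Cs$. Given a word $w=s_1\cdots s_n$ of chord slides that represents the identity in $\Cs$, Observation \ref{obs:WCS} lets us replace each slide $s_i$ by its defining composition of two Whitehead moves, producing a word $\tilde w$ in the generators of $\Pt$. Because $\Cs$ is a full subgroupoid of $\Pt$, $\tilde w$ represents the identity in $\Pt$ as well, so the presentation of $\Pt$ expresses $\tilde w$ as a product of conjugates of $I_e$, $C_{e,f}$, and $P_{e,f}$. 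Applying $\mathfrak{Cs}$ to this derivation recovers $w$ itself, so it suffices to exhibit the $\mathfrak{Cs}$-image of each of these three families of $\Pt$-relations as a consequence of $T$, $I$, $C$, $L$, and $R$ (together with the multi-chord versions discussed earlier).

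For $I_e$, the image $\mathfrak{Cs}(I_e)=\mathfrak{Cs}(W_{e'})\circ\mathfrak{Cs}(W_e)$ is either trivial (when $W_e$ has type 1 or 2) or a sequence of chord slides along a single chord followed by its inverse sequence (when $W_e$ has type 3--6); in either case, iterated application of the involutivity relation $I$ collapses it. For $C_{e,f}$, the non-adjacency of $e$ and $f$ together with the last statement of Lemma \ref{lem:WmoveisCslide} ensures that the corresponding chord slides take place along distinct chords with disjoint chord ends, so the image is a multi-chord instance of the commutativity relation $C$.

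The pentagon case $P_{e,f}$ is the main obstacle. The five Whitehead moves in $P_{e,f}$ each have one of the six types, but only finitely many combinations can actually occur in a given local configuration of two adjacent edges in a trivalent bordered fatgraph. I plan to enumerate these combinations up to the symmetry exchanging $e$ and $f$, and in each case write down the explicit sequence of chord slides that $\mathfrak{Cs}$ returns. In the generic configurations, two of the five Whitehead moves have type 1 or 2 (hence contribute trivially to $\Cs$) while the remaining three unwind into a sequence of three consecutive slides of two chords along each other---this yields exactly the triangle relation $T$, as indicated in Figure \ref{fig:relations}. In the remaining configurations, all five Whitehead moves contribute nontrivial slides, and the image is precisely a left pentagon $L$ or a right pentagon $R$, depending on the direction of sliding. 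The verification is primarily a bookkeeping exercise, using $I$ to dispose of trivial inverse pairs created by type 1 or 2 moves and $C$ to commute slides along disjoint chords where needed; the real content of the theorem is the observation that the triangle relation $T$ is exactly what is produced by the generic pentagon, so that no further relations beyond $T,I,C,L,R$ are ever required.
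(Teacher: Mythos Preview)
Your overall bootstrap strategy via $\mathfrak{Cs}\colon\Pt\to\Cs$ is correct and matches the paper's. The involutivity case is fine. However, your treatment of the commutativity relation $C_{e,f}$ contains a genuine gap.

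You assert that non-adjacency of $e$ and $f$, together with the last clause of Lemma~\ref{lem:WmoveisCslide}, forces the chord slides in $\mathfrak{Cs}(W_e)$ and $\mathfrak{Cs}(W_f)$ to have disjoint chord ends, so that the image is a multi-chord instance of $C$. This is false. Non-adjacency does guarantee that the two chords \emph{along which} one slides are distinct (i.e.\ the endpoints $\mb{y}_0$ and $\mb{z}_0$ differ), but the sets of chord ends \emph{being slid}, $\{\mb{y}_1,\ldots,\mb{y}_k\}$ and $\{\mb{z}_1,\ldots,\mb{z}_m\}$, need not be disjoint: they arise as the leaves of two subtrees of $T_G$, and such subtrees can be nested rather than disjoint. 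In the nested case the image of $C_{e,f}$ under $\mathfrak{Cs}$ is not a $C$ relation at all---it is precisely an $L$ or an $R$ relation (depending on the relative positions of $\mb{z}_0$ and the $\mb{z}_i$). Even in the disjoint case one may have $\mb{y}_i=\mb{\bar z}_j$ for some $i,j$, which produces the opposite-end commutativity $O$; and one may have $\mb{z}_j=\mb{\bar y}_0$, which after one move throws you into the nested situation. So the $\Pt$-commutativity relations already account for $L$, $R$, and $O$, and your argument as written would not detect this.

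Your pentagon analysis is also too coarse. It is not the case that the image is always either a bare $T$ (when two moves are of type 1 or 2) or a bare $L$ or $R$ (otherwise). The paper carries out a case analysis over the twenty-four possible traversal orders of the five sectors adjacent to $e$ and $f$; many of these (e.g.\ the case labelled $14325$) reduce to a combination of $T$ together with the adjacent-commutativity relation $A$, and several others (e.g.\ $13425$, $13524$) reduce to $A$ alone. Since $A$ and $O$ must themselves be derived from $T$, $L$, and $R$ (the paper does this separately), the bookkeeping is more intricate than ``generic pentagon $\Rightarrow T$, otherwise $L$ or $R$''.
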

\begin{proof}
To prove this, we rely on the fact that the relations in $\Pt$ are generated by the involutivity, commutativity, and pentagon relations.  In particular, any sequence of chord slides representing a trivial morphism in $\Cs$  must be a product of  images of these relations under the map $\mathfrak{Cs}$.  
We shall analyze each of these relations in turn and show that each one translates to a composition of the above relations $T$, $I$, $C$, $L$, $R$, $O$, and $A$ in $\Cs$.

First, it is clear that  the involutivity relation $I$ in $\Cs$  implies involutivity in $\Pt$.  Indeed, given a Whitehead move $W_e$ in $\Pt$, we have by Lemma \ref{lem:WmoveisCslide} that the corresponding element of $\Cs$ is a slide of some number, say $n$, adjacent chord endpoints along a single chord.  The inverse  Whitehead move $W_{e'}$ is similarly a slide of the same chord endpoints back along the  same chord to their original positions.   Thus, in this case the relation  $W_e\circ W_{e'}$ translates into the $n$-chord version of  relation $I$.  As mentioned above, the multiple-chord version of $I$ follows from $I$ by a simple induction.

Now consider the commutativity relation in $\Pt$.  Let $W_e$ and $W_f$ be the two Whitehead moves of such a relation so that the edges $e$ and $f$ are not adjacent.  Let $\{\mb{y}_1, \ldots, \mb{y_k}\}$ be the chord endpoints which are slid under the move $\mathfrak{Cs}(W_e)$, and let $\mb{y}_0$ be the endpoint of the chord which the $\{\mb{y}_1, \ldots, \mb{y_k}\}$ are slid along.  Similarly, let  $\{\mb{z}_1, \ldots, \mb{z_m}\}$ be the chord endpoints which are slid by   $\mathfrak{Cs}(W_f)$ along the chord endpoint $\mb{z}_0$.  
Since $e$ and $f$ are disjoint, the last statement of Lemma \ref{lem:WmoveisCslide} implies that $y_0 \neq z_0$.  However, it is still possible that some $y_i$ are equal to some $z_j$.  If no such equality exists,  it is easy to see that the corresponding relation in $\Cs$ is a multiple-chord version of the disjoint commutativity relation $D$, which again follows from the single chord version $D$ by induction.

We now must address the possible cases where some $y_i$ coincides with some $z_j$.  
We begin by noting that  the endpoints $\{\mb{y}_1, \ldots, \mb{y_k}\}$ and $\{\mb{z}_1, \ldots, \mb{z_m}\}$ of the  slid chords of $\mathfrak{Cs}(W_e)$ and $\mathfrak{Cs}(W_f)$ must be either disjoint or  nested with respect to the ordering $\prec$.  This follows  since  the slid chords correspond to the leaves of two subtrees of $T_G$ which themselves  must be either nested or disjoint.  (It is a general fact that  for any two oriented edges $\mb{y},\mb{z}\in\mathcal{E}_{or}(T_G)$ for which the  subtrees $T(\mb{y})$ and $T(\mb{z})$ do not contain the tail of $G$, the subtrees $T(\mb{y})$ and $T(\mb{z})$ are either nested or disjoint.)

Consider  the situation where the chord endpoints $\{\mb{y}_1, \ldots, \mb{y_k}\}$ and $\{\mb{z}_1, \ldots, \mb{z_m}\}$ are disjoint and $\mb{y}_i\neq \mb{\bar z}_0$ and $\mb{z}_j\neq \mb{\bar y}_0$ but some $y_i$ are equal to some $z_j$.  The simplest situation is where $k=m=1$ so that $\mb{y}_1=\mb{\bar z}_1$.  The commutativity relation in $\Pt$ then translates directly to the opposite end commutativity relation $O$ in $\Cs$.  The more general case $k,m\geq 1$  follows by induction using $I$, $D$, and $O$.   

Now we analyze the case where   $\{\mb{y}_1, \ldots, \mb{y_k}\}$ and $\{\mb{z}_1, \ldots, \mb{z_m}\}$ are disjoint but some $\mb{y}_i$ equals $\mb{\bar z}_0$ or some $\mb{z}_j$ equals $\mb{\bar y}_0$.  Note that Lemma \ref{lem:whichslides} implies that both relations cannot hold simultaneously, so we may assume, for instance, that $\mb{z}_j=\mb{\bar y}_0$. 
After the move $W_e$, we see that we are in the situation where the chord endpoints are nested, and we treat this case next. 

 Now assume that the endpoints of the slid chords of  $\mathfrak{Cs}(W_e)$ and $\mathfrak{Cs}(W_f)$ are  nested.   Again, we first treat a simple case.  Assume that $k=1$ and $m=2$, with  $\mb{y}_1=\mb{z}_2$, $\mb{y}_0=\mb{z}_1$ and  $\mb{z}_0\iprec\mb{z}_1\iprec\mb{z}_2$.  In this case, it is easy to see that the corresponding commutativity relation is given by the Left commutativity relation $L$.   
 If we instead had $\mb{z}_1\iprec\mb{z}_2\iprec\mb{z}_0$, then we would have obtained the right commutativity relation $R$.  Similarly, one can  verify that under the conditions $\mb{y}_1=\mb{z}_1$ and  $\mb{y}_0=\mb{z}_2$ then  one obtains  (the inverse of) relations $R$ and $L$ for $\mb{z}_0\iprec\mb{z}_1\iprec\mb{z}_2$ and $\mb{z}_1\iprec\mb{z}_2\iprec\mb{z}_0$  respectively.  As usual, the more general case of $k>1$ follows by induction, now requiring   $L$ or $R$ together with  $I$, $D$, and $O$.  

Finally, we must treat the pentagon relations where the edges $e$ and $f$ are adjacent.  There are twenty-four distinct case that we must analyze, depending on the order that the sectors adjacent to the two edges $e$ and $f$ are traversed.  We shall label these cases by the clockwise numbering of these sectors (up to cyclic permutation).  

We will treat only case 14325 in detail, which is depicted in figure \ref{fig:14325}.   If  the edge of $G$ separating sectors 2 and 5 is not  a generator,  it then corresponds to several, say $n$, chords of $C_G$.  In the figure, we have depicted the case $n=2$.   Note that bottom right corner of the pentagon in figure \ref{fig:14325} can be bypassed by applying the  $T$ relation.  After removing this chord diagram, only an $n$-chord version of the $A$ relation remains.  Since the $n$-chord version of $A$ can be proven by induction using only $A$ itself, 
the relation of case 14325 can be written as a product of $A$ and $T$. 

\begin{figure}[!h]
\begin{center}
\epsffile{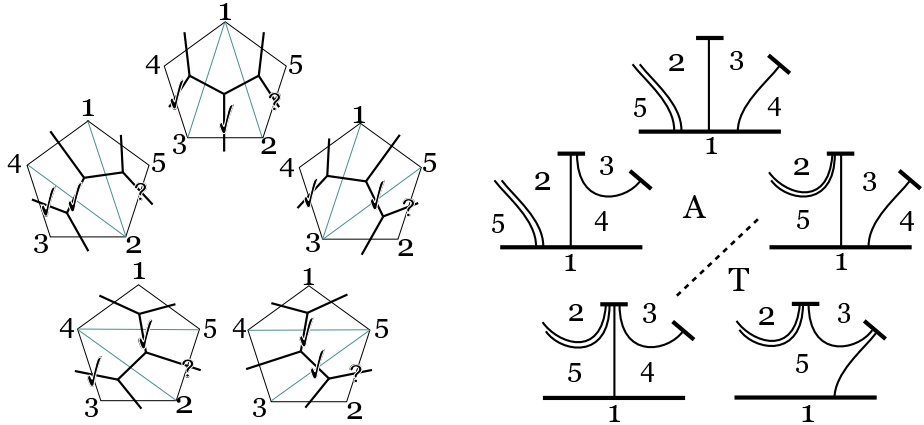}
\caption{Pentagon relation 14325.}
\label{fig:14325}
\end{center}
\end{figure}

Note that for any case whose label begins with $12$,  at least three out of the five moves are of type 1 or 2.  So in these cases, the only relation that can arise is involutivity $I$, as one can also check directly.  
We now  simply list the results of the other cases (we have not included the relations needed for the induction in the multiple-chord versions):
\vspace{.2in}

\begin{tabular}{l l | l l | l l}
\textrm{Case} & \textrm{follows from}  & \textrm{Case} & \textrm{follows from}& \textrm{Case} & \textrm{follows from}  \\
\hline 
13245 &  I  & 14235 & I  & 15234 & I \\
13254 & I   & 14253 & T, I & 15243 &  T, I \\
13425 &  A   & 14325 &  T, A & 15324 & T, A  \\
13452 &  I & 14352 & R  & 15342 & R \\
13524 & A & 14523 & A& 15423 & A \\
13542 & I    & 14532 & L& 15432 & L \\
\end{tabular}

\end{proof}

\section{Presentations}

As discussed in Section \ref{sect:markedborderedfatgraphs}, the mapping class group $\M1g$ has a presentation in terms of sequences of Whitehead moves on bordered fatgraphs.  By the results of the previous section, this presentation immediately leads to the following chord diagrammatic version.  

\begin{theorem}
The mapping class group $\M1g$  of a once bordered surface has an (infinite) presentation with generators given by sequences of chord slides on marked bordered chord diagrams such that the initial and final chord diagrams are isomorphic (as unmarked fatgraphs) and relations given by saying that two such sequences are equal if they differ by the insertion and deletion of any finite number of $T$, $I$, $C$, $L$, and $R$ relations.
\end{theorem}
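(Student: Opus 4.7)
The plan is to derive this presentation by combining two ingredients: the standard procedure, already used by Penner for $\Pt$, for extracting a group presentation from a groupoid presentation, together with the two main results of the previous sections, namely that morphisms of $\Cs$ are generated by chord slides and that every relation in $\Cs$ follows from $T$, $I$, $C$, $L$, $R$.

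First I would recall how one passes from a groupoid to a group in this context. Since $\M1g$ acts freely on $|\dFGC|$ with contractible total space, every mapping class $\phi \in \M1g$ is represented by a morphism in $\Pt$ from some marked trivalent fatgraph $G$ to its translate $\phi\cdot G$; equivalently, by a sequence of Whitehead moves whose starting and ending marked fatgraphs have the same underlying unmarked fatgraph. Two such sequences represent the same mapping class if and only if, after using the $\M1g$-action to identify endpoints, they become equal morphisms in $\Pt$. Combined with the fact that $\Pt$ is presented by Whitehead moves modulo the involutivity, commutativity, and pentagon relations, this gives the standard Ptolemy presentation of $\M1g$ already in the literature.

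Next I would transfer this presentation from $\Pt$ to $\Cs$. The branch reduction algorithm produces, for every marked bordered fatgraph $G$, a canonical marked bordered chord diagram $C_G$ together with a canonical morphism $G \to C_G$ in $\Pt$, so the inclusion $\Cs \hookrightarrow \Pt$ is an $\M1g$-equivariant equivalence of categories. Since $\M1g$ still acts freely on the objects of $\Cs$, the same recipe applied with $\Cs$ in place of $\Pt$ produces a presentation of $\M1g$ whose generators are morphisms of $\Cs$ between marked chord diagrams that lie in a common $\M1g$-orbit. But two marked chord diagrams are $\M1g$-equivalent precisely when they are isomorphic as unmarked fatgraphs, since $\M1g$ acts freely and transitively on the set of markings of a given bordered fatgraph.

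Finally, I would substitute the combinatorial description of $\Cs$ into this presentation. By Theorem~\ref{thm:generate} the morphisms of $\Cs$ are generated by chord slides, so every generator becomes a finite sequence of chord slides; by the main theorem of the previous section, all relations among these generators are consequences of $T$, $I$, $C$, $L$, $R$. This yields exactly the statement of the theorem. The main obstacle is essentially bookkeeping: one must verify that because the $\M1g$-action on marked chord diagrams is free, the groupoid-to-group translation introduces no additional stabilizer-type relations beyond the five already listed. This is standard and transfers verbatim from Penner's original argument for the Ptolemy groupoid.
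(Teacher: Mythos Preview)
Your proposal is correct and follows essentially the same approach as the paper. The paper treats this theorem as an immediate corollary: it simply observes that the Ptolemy presentation of $\M1g$ from Section~\ref{sect:markedborderedfatgraphs}, combined with Theorem~\ref{thm:generate} and the main theorem on relations in $\Cs$, yields the stated chord-diagrammatic presentation, with no further argument given; your write-up spells out the groupoid-to-group translation more explicitly but adds nothing substantively different.
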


As in the case of the Ptolemy groupoid, it is possible to consider quotients of $\Cs$ by any subgroup of the mapping class group $\M1g$ by considering them as full subgroupoids of the fundamental path groupoids of the corresponding  quotients of $|\mathcal{\hat G}_T|$.  
 In particular, consider the descending Johnson filtration $\M1g[k]\subset \M1g$ of $\M1g$ where $\M1g[k]$ is the subgroup acting trivially on the $k$th nilpotent quotient $N_k$ of $\pi_1$.   $\M1g[0]$ is the full mapping class group, and $\M1g[1]$ is the Torelli group, which is the subgroup of $\M1g$ acting trivially on the first homology $H_1(\S1g,\bZ)\cong H$ of $\S1g$.   Let $\mathfrak{Cs}_{g,1}[k]$ denote the quotient of $\Cs$ by $\M1g[k]$.  The groupoid $\mathfrak{Cs}_{g,1}[0]$ is naturally identified with the groupoid of chord slides on (unmarked) chord diagrams.  More generally,  $\mathfrak{Cs}_{g,1}[k]$ is the groupoid of chord slides on geometrically $N_k$-marked chord diagrams (see \cite{moritapenner} for a discussion of the fatgraph version), which we now describe.

Consider the composition of a $\pi_1$-marking of a fatgraph $G$ with the projection map $\pi_1\ra N_k$.  The result is a map $N_k\colon \mathcal{E}_{or}(G)\ra N_k$ called a geometric $N_k$-marking of $G$.  We shall focus on the particular case of $k=1$ where we obtain a map $H\colon \mathcal{E}_{or}(G)\ra H$ which we call a geometric $H$-marking of $G$. This map obviously satisfies the  abelian versions of the orientation, vertex, and surjectivity   conditions.    One can check that such a map also satisfies the following geometricity condition: for any two oriented edges $\mb{x}$ and $\mb{y}$ of $G$, the symplectic pairing $H(\mb{x})\cdot H(\mb{y})$ is equal to  $\langle \mb{x},\mb{y}\rangle$, where $\langle \; ,\; \rangle$ is a skew pairing  on $\mathcal{E}_{or}(G)$ defined by setting $\langle \mb{x},\mb{y}\rangle$ equal to $-1$ if up to cyclic permutation $\mb{x}<\mb{y}<\mb{\bar x}<\mb{\bar y}$, equal to $1$ if up to cyclic permutation $\mb{x}<\mb{y}<\mb{\bar x}<\mb{\bar y}$, and equal to $0$ otherwise (see \cite{bkp}).  

Since the generators and relations of $\Cs$ descend to generators and relations of each of the quotient groupoids, we immediately obtain the following theorems.  

\begin{theorem}
The groupoid $\mathfrak{Cs}_{g,1}[k]$ is generated by chord slides on $N_k$-marked bordered chord diagrams and has relations given by compositions of $T$, $I$, $C$, $L$, and $R$.  
\end{theorem}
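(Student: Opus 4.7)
The plan is to deduce this theorem as a descent from the presentation of $\Cs$ given by the Main Theorem. First I would identify the objects. Since $\M1g$ acts freely on $\pi_1$-markings of any fixed bordered chord diagram (automorphisms of bordered fatgraphs being trivial), and $\M1g[k]$ is by definition the subgroup acting trivially on $N_k$, the $\M1g[k]$-orbits of $\pi_1$-marked diagrams correspond bijectively to $N_k$-marked diagrams, where the $N_k$-marking is obtained by composing the $\pi_1$-marking with the projection $\pi_1 \twoheadrightarrow N_k$.

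Next I would verify that generators and relations descend. A chord slide alters a marking by multiplication within $\pi_1$, an operation equivariant under $\pi_1 \twoheadrightarrow N_k$; hence every chord slide in $\Cs$ projects to a well-defined chord slide on an $N_k$-marked diagram, and by Theorem~\ref{thm:generate} these generate $\mathfrak{Cs}_{g,1}[k]$. Similarly, each of $T$, $I$, $C$, $L$, $R$ is a composition of chord slides representing a trivial morphism in $\Cs$, so its projection is a composition of chord slides that remains trivial in the quotient.

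The essential step is completeness of the relations. Here I would invoke the contractibility of $|\mathcal{\hat G}_T|$: since there is a unique morphism between any two objects of $\Cs$, the automorphism group $\Aut_{\mathfrak{Cs}_{g,1}[k]}([G])$ is canonically isomorphic to $\M1g[k]$ with the identity morphism corresponding to the trivial group element. A sequence of chord slides based at $[G]$ whose composition equals $\id_{[G]}$ therefore lifts, via the unique lift of each successive slide starting from a chosen representative $G$, to a sequence in $\Cs$ terminating at $G$ itself --- any other terminus $\phi G$ would represent the nonidentity element $\phi \in \M1g[k]$ under the above isomorphism. The lifted sequence is thus a composition equal to $\id_G$ in $\Cs$, which by the Main Theorem decomposes as a product of $T$, $I$, $C$, $L$, $R$ relations; this decomposition projects directly to the desired one in the quotient. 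The main subtlety I foresee is formalizing the lifting step, which rests on the universal-cover picture provided by the contractibility of $|\mathcal{\hat G}_T|$ and the freeness of the $\M1g[k]$-action on $\pi_1$-markings.
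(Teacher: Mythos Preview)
Your proposal is correct and follows essentially the same approach as the paper: the theorem is stated immediately after the one-line justification ``Since the generators and relations of $\Cs$ descend to generators and relations of each of the quotient groupoids, we immediately obtain the following theorems.'' Your argument supplies the details the paper omits, in particular the completeness step --- that a trivial word in the quotient lifts to a trivial word in $\Cs$ via the unique-path-lifting coming from contractibility of $|\mathcal{\hat G}_T|$ and freeness of the $\M1g[k]$-action --- which is exactly the content behind the paper's claim that relations ``descend.''
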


\begin{theorem}
The mapping class group $\M1g$  of a once bordered surface has an (infinite) presentation with generators given by sequences of chord slides on (unmarked) bordered chord diagrams  beginning and ending at the same chord diagram,  and relations given by saying that two such sequences are equal if they differ by the insertion and deletion of any finite number of $T$, $I$, $C$, $L$, and $R$ relations.
\end{theorem}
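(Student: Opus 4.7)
The plan is to derive this presentation of $\M1g$ from the groupoid presentation of $\mathfrak{Cs}_{g,1}[0]$ established in the preceding theorem, by extracting the isotropy group at a chosen object.

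First I would identify $\M1g$ with the isotropy group of $\mathfrak{Cs}_{g,1}[0]$ at any fixed unmarked bordered chord diagram $C$. Because $|\mathcal{\hat G}_T|$ is homotopy equivalent to \Teich space, which is a ball, the fundamental path groupoid $\tgrpd$ has a unique morphism between any two of its objects; since $\Cs$ is a full subgroupoid, the same holds there. The action of $\M1g$ on the objects of $\Cs$ is free (since any automorphism of a bordered fatgraph is trivial) and its orbits are precisely the objects of $\mathfrak{Cs}_{g,1}[0]$. Hence any loop at $C$ in the quotient lifts to a unique path in $\Cs$ from a chosen marked representative $\tilde C$ to $\phi\cdot \tilde C$ for a unique $\phi\in\M1g$, and the resulting map from the isotropy group at $C$ to $\M1g$ is a well-defined group isomorphism.

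Next I would translate the groupoid presentation of $\mathfrak{Cs}_{g,1}[0]$ into a group presentation of this isotropy group via the standard procedure of restricting to loops at $C$. Since every object of $\mathfrak{Cs}_{g,1}[0]$ is connected to $C$ by a sequence of chord slides (the groupoid is connected, inherited from the path-connectedness of $|\mathcal{\hat G}_T|$), every element of $\M1g$ is represented by at least one such loop. The generators of the group presentation are therefore exactly the sequences of chord slides beginning and ending at $C$, and the relations are exactly those coming from insertion and deletion of the local $T$, $I$, $C$, $L$, and $R$ relations depicted in Figure \ref{fig:relations}.

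The main subtlety — and the only real content beyond bookkeeping — is verifying that any two loops at $C$ representing the same element of $\M1g$ differ by a finite sequence of insertions and deletions of these five relations. This follows because the $\M1g$-action on $\Cs$ is free and $\Cs$ has a unique morphism between any pair of objects: two loops at $C$ represent the same mapping class precisely when they lift to paths in $\Cs$ with the same endpoints, i.e., when they represent the same morphism of $\mathfrak{Cs}_{g,1}[0]$, which by the preceding theorem happens if and only if they are equivalent through the $T$, $I$, $C$, $L$, $R$ relations. I expect no new ideas beyond those already developed; the argument is a formal deduction from the groupoid presentation combined with the contractibility of $|\mathcal{\hat G}_T|$ and the freeness of the $\M1g$-action.
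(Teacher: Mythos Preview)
Your proposal is correct and is essentially the argument the paper has in mind: the paper gives no separate proof of this theorem but states that it follows immediately from the presentation of the quotient groupoid $\mathfrak{Cs}_{g,1}[0]$ (the preceding theorem), and you have spelled out precisely the standard deduction---identifying $\M1g$ with an isotropy group via contractibility of $|\mathcal{\hat G}_T|$ and freeness of the $\M1g$-action---that underlies that claim. One small remark: you extract the presentation at a single fixed object $C$, whereas the theorem as stated allows loops based at any unmarked chord diagram; these yield equivalent (the latter merely redundant) presentations since the groupoid is connected, so no adjustment is needed.
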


\begin{theorem}
The Torelli  group $\mathcal{I}_{g,1}$  of a once bordered surface has a presentation with generators given by sequences of chord slides on geometrically $H$-marked bordered chord diagrams  beginning and ending at the same geometrically $H$-marked chord diagram  and relations given by saying that two such sequences are equal if they differ by the insertion and deletion of any finite number of $T$, $I$, $C$, $L$, and $R$ relations.
\end{theorem}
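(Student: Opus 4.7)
The plan is to derive this presentation of $\Ig$ from the preceding theorem on the groupoid $\mathfrak{Cs}_{g,1}[1]$ by extracting the vertex group at a fixed object. The Torelli group $\Ig = \M1g[1]$ acts freely on the objects of $\Cs$, with freeness inherited from the free $\M1g$-action on $\pi_1$-marked bordered fatgraphs via the branch reduction correspondence. By construction, the orbit space is precisely the set of geometrically $H$-marked bordered chord diagrams, i.e., the object set of $\mathfrak{Cs}_{g,1}[1]$, so the quotient functor $\Cs \to \mathfrak{Cs}_{g,1}[1]$ exhibits the latter as the $\Ig$-quotient of $\Cs$.

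Next, I would invoke the fact that $\grdFGC$ is homotopy equivalent to the contractible \Teich space of $\S1g$, hence simply connected. Since $\Cs$ is by definition a full subgroupoid of the fundamental path groupoid $\tgrpd$ of $\grdFGC$, every vertex group of $\Cs$ is trivial. By the standard covering-groupoid principle, the vertex group of $\mathfrak{Cs}_{g,1}[1]$ at any fixed $H$-marked diagram $\bar G$ is then canonically isomorphic to $\Ig$. Concretely: a loop at $\bar G$ is a sequence of chord slides on $H$-marked chord diagrams beginning and ending at $\bar G$; after choosing a $\pi_1$-marked lift $G$ of $\bar G$, the loop lifts uniquely, via the unambiguous evolution of $\pi_1$-markings under chord slides, to a path in $\Cs$ from $G$ to some $\pi_1$-marked $G'$ again covering $\bar G$, and the unique element of $\Ig$ sending $G$ to $G'$ is the associated Torelli element; every element of $\Ig$ arises in this fashion.

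From this identification, the claimed presentation is obtained by restricting the presentation of $\mathfrak{Cs}_{g,1}[1]$ from the preceding theorem to the vertex group at $\bar G$. The generators become the sequences of chord slides on geometrically $H$-marked bordered chord diagrams beginning and ending at the same $H$-marked diagram, and the relations say exactly that two such sequences represent the same Torelli element if and only if they differ by the insertion and deletion of finitely many $T$, $I$, $C$, $L$, and $R$ relations.

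The step requiring the most care is verifying that two $\pi_1$-markings of a fixed chord diagram $C$ which abelianize to the same $H$-marking are related by a unique element of $\Ig$. Uniqueness is immediate from the free $\M1g$-action on $\pi_1$-markings. For existence, if two such $\pi_1$-markings of $C$ induce the same $H$-marking, then via Nielsen's embedding $N\colon \M1g \hra \Aut(F_{2g})$ the corresponding change-of-marking automorphism of $F_{2g}$ preserves the homology class of every generator, hence is represented by a Torelli element. Once this is in place, reading off the vertex group from the preceding theorem delivers the asserted presentation of $\Ig$.
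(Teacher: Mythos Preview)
Your proposal is correct and follows the same route the paper takes, namely deducing the group presentation by passing from $\Cs$ to its Torelli quotient $\mathfrak{Cs}_{g,1}[1]$ and reading off the vertex group; the paper simply asserts that ``the generators and relations of $\Cs$ descend to generators and relations of each of the quotient groupoids'' and states the theorem without further argument, so you have in fact supplied considerably more detail than the original. One small point worth making explicit in your final paragraph: to conclude that the change-of-marking automorphism lies in the image of Nielsen's embedding (and not merely in $\Aut(F_{2g})$), you should note that both $\pi_1$-markings satisfy the geometricity condition $\pi_1(\mb{\bar t})=[\partial\S1g]$, so the automorphism fixes the boundary word and hence comes from $\M1g$.
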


Finally, in \cite{moritapenner}, a notion of a finite presentation of an (infinite) groupoid was introduced and applied to the Torelli groupoid, which is the quotient of the Ptolemy groupoid $\Pt$ by the action of the Torelli group.  In particular, they used the action of the Torelli group $\mathcal{I}_{g,1}$ and the symplectic group $Sp(2g,\bZ)$, both of which are finitely generated, on a $\M1g$ fundamental domain of \Teich space  to give a finite description of the generators and relations, which they called a finite presentation,   of the Torelli groupoid.   An obvious modification of their proof to the setting of chord diagrams gives 
\begin{theorem}
The groupoid $\Cs[1]$ is finitely presentable in the sense of \cite{moritapenner}.
\end{theorem}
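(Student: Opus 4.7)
The plan is to follow the Morita-Penner argument for the Torelli groupoid of the Ptolemy groupoid almost verbatim, replacing Whitehead moves by chord slides and the Ptolemy relations by $T$, $I$, $C$, $L$, $R$. Their scheme rests on three inputs, each of which holds in our setting: (i) there are only finitely many $\M1g$-orbits of objects, since there are only finitely many isomorphism classes of unmarked bordered chord diagrams of genus $g$; (ii) the quotient $Sp(2g,\bZ) = \M1g/\mathcal{I}_{g,1}$ is finitely generated; and (iii) $Sp(2g,\bZ)$ admits a finite presentation. In addition, one uses that $\mathcal{I}_{g,1}$ is finitely generated (for $g \geq 3$), which was one of Morita and Penner's own consequences.

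Concretely, I would first choose a finite fundamental domain $\mathcal{F}$ consisting of one $H$-marked bordered chord diagram from each $\M1g$-orbit. For each symplectic generator $\sigma$ and each $F \in \mathcal{F}$, Theorem~\ref{thm:generate} (generation of $\Cs$ by chord slides) allows me to express the morphism $F \to \sigma \cdot F$ in $\Cs[1]$ as an explicit finite sequence of chord slides. Together with the (finitely many) chord slides between objects inside $\mathcal{F}$, these form the finite generating set. The relations I would impose are (a) all $T$, $I$, $C$, $L$, $R$ relations supported in $\mathcal{F}$; (b) the defining relations of $Sp(2g,\bZ)$, each written as a closed chord-slide loop via the sequences chosen in the previous step; and (c) compatibility relations that describe how chord slides conjugate past the chosen symplectic generators.

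The main obstacle — shared with the Morita-Penner proof — is verifying that the finite relations (a), (b), (c) suffice to generate every relation. The argument is a coset-enumeration style reduction: any relation in $\Cs[1]$ lifts to a relation in $\Cs$ modulo $\mathcal{I}_{g,1}$, where by the main theorem of this paper it becomes a finite product of $T$, $I$, $C$, $L$, $R$ relations; the compatibility relations (c) then let one push this product back into $\mathcal{F}$, where it reduces via (a) and the defining relations (b). Because the structural argument mirrors Morita-Penner's exactly and only the richer relation set $\{T, I, C, L, R\}$ appears in place of the simpler Ptolemy relations, I expect no substantive new difficulties beyond the bookkeeping needed to write out (c) explicitly, which is precisely why the theorem is phrased as an ``obvious modification'' of their work.
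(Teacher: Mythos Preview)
Your proposal is correct and matches the paper's approach exactly: the paper gives no detailed argument at all, stating only that ``an obvious modification of their proof to the setting of chord diagrams'' yields the result, and what you have written is precisely a sketch of that modification. If anything, you have supplied more detail than the paper does, correctly identifying the finite generation of both $\mathcal{I}_{g,1}$ and $Sp(2g,\bZ)$ together with the finiteness of $\M1g$-orbits of objects as the needed inputs.
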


\section{Dual Chord Diagrams}

In this section we introduce the notion of a dual chord diagram.  For this, consider a chord diagram $C$ and the two linear orderings $<$ and $\prec$ of its oriented chords.  

\begin{definition}
The dual chord diagram to $C$, denoted $\widehat C$, is a chord diagram with the same set of oriented chords as $C$, but arranged so that the left to right order of chord endpoints along the core is determined by $<$.
\end{definition}
We comment here that this notion of duality is related to the duality which associates a trivalent fatgraph to a  triangulation of $\S1g$ based at $p$.  While this perspective is helpful, in particular in interpreting Observations \ref{obs:o1} and \ref{obs:o2}, it is not necessary for our goals, 
 so we leave it to  the interested reader to formulate the precise correspondence.  

Note that by the definition, the chords of a dual chord diagram $\widehat C$ under their preferred orientation with respect to $<$ are all oriented from left to right along the core.  
We will write $\mb{c}\ipred\mb{d}$ if the oriented chord $\mb{c}$ immediately precedes the oriented chord $\mb{d}$ under the linear ordering  $<$ restricted to the set of oriented chords.  We immediately have the following 
\begin{lemma}
For oriented chords $\mb{c}$ and $\mb{d}$, 
$\mb{c}\iprec \mb{d} $ if and only if $ \mb{\bar d}\ipred \mb{c}$.
\end{lemma}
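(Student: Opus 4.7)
The plan is to prove both directions by a direct analysis of the boundary cycle traversal of $C$ near the relevant chord endpoints, using the local combinatorics at each vertex. The key local observation is that at any trivalent chord endpoint $v$ of $C$---with a chord emanating upward from the core and with core segments emanating east and west---the counterclockwise cyclic order of incoming oriented edges is (east-core-in, chord-in, west-core-in). By the boundary-cycle rule, this yields three transitions at $v$: arriving via the east-core emits the chord, arriving via the chord emits the west-going core, and arriving via the west-going core emits the east-going core. At the rightmost vertex (viewed as bivalent) the analogous transitions hold with the east-core omitted; at the vertex at position~$1$ the tail plays the role of the ``west-core''.

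For the forward direction I would assume $\mb{c} \iprec \mb{d}$, so that the endpoint of $\mb{c}$ lies immediately west of the endpoint of $\mb{d}$, with a single core segment $s$ (oriented $\mb{s}$ eastward) between them. At the endpoint of $\mb{d}$ the transition after $\mb{\bar d}$ emits the west-going $\mb{\bar s}$, which arrives at the endpoint of $\mb{c}$ from the east and triggers the transition emitting $\mb{c}$. Thus the boundary cycle contains $\ldots, \mb{\bar d}, \mb{\bar s}, \mb{c}, \ldots$, with no intermediate oriented chord between $\mb{\bar d}$ and $\mb{c}$; hence $\mb{\bar d} \ipred \mb{c}$.

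For the reverse direction I would trace the boundary cycle forward from $\mb{\bar d}$. The transition at the endpoint of $\mb{d}$ emits the west-going core segment, which arrives at the immediately adjacent vertex to the west from the east; at that vertex the next transition emits the chord there, which by hypothesis equals $\mb{c}$. Hence the endpoint of $\mb{c}$ lies immediately west of that of $\mb{d}$, giving $\mb{c} \iprec \mb{d}$. The main subtlety, which I expect to be the only real obstacle, is the edge case in which the endpoint of $\mb{d}$ is at position~$1$: then the transition after $\mb{\bar d}$ emits $\mb{\bar t}$ and reaches the univalent vertex, where the boundary cycle closes via $\mb{t}$ without any further oriented chord appearing. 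In this case $\mb{\bar d}$ is the last oriented chord in $<$, contradicting $\mb{\bar d} \ipred \mb{c}$; so this edge case is excluded by hypothesis, completing the proof.
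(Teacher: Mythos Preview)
Your proof is correct; the paper itself gives no proof of this lemma, declaring it immediate from the definitions, and your direct boundary-cycle trace is exactly the local verification underlying that claim. One minor phrasing quibble: in your list of the three transitions, ``arriving via the west-going core'' is ambiguous (the west-side core segment, as an incoming edge, is oriented eastward), but you apply the transitions correctly in the body of the argument, so this does not affect the proof.
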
 
From this we see that the operation of taking duals  twice is not the identity, $\widehat{\widehat C}\neq C$.  The next lemma  follows immediately from the previous one.
\begin{lemma} For oriented chords $\mb{c}$ and $\mb{d}$ with $\mb{c}\iprec \mb{d}$, 
the chord slide of $\mb{c}$ along $\mb{d}$ in $C$ corresponds to the slide of $\mb{\bar d}$ along $\mb{c}$ in $\widehat C$.  Similarly, the chord slide of $\mb{d}$ along $\mb{c}$ in $C$ corresponds to the slide of $\mb{c}$ along $\mb{\bar d}$ in $\widehat C$.
\end{lemma}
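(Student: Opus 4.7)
My approach is to derive the lemma directly from the preceding one together with Lemma \ref{lem:unique}.  To begin, I would observe that the hypothesis $\mb{c}\iprec \mb{d}$ in $C$ is equivalent, via the previous lemma, to $\mb{\bar d}\ipred \mb{c}$, which by definition of $\widehat C$ says $\mb{\bar d}\iprec \mb{c}$ in $\widehat C$.  In particular, both proposed slides in $\widehat C$---the slide of $\mb{\bar d}$ along $\mb{c}$ and the slide of $\mb{c}$ along $\mb{\bar d}$---are well-defined chord slides (noting that ``slide of $X$ along $Y$'' requires only that $X$ and $Y$ be $\iprec$-adjacent in either order).

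Next, I would match up the resulting $\iprec$-configurations.  Let $C'$ denote the result of sliding $\mb{c}$ along $\mb{d}$ in $C$; by the chord slide definition one has $\mb{\bar d}\iprec \mb{c}$ in $C'$, so the previous lemma applied in $C'$ yields $\mb{\bar c}\iprec \mb{\bar d}$ in $\widehat{C'}$.  On the other hand, the chord slide definition applied to the slide of $\mb{\bar d}$ along $\mb{c}$ in $\widehat C$ (where $\mb{\bar d}\iprec_{\widehat C}\mb{c}$) also produces a diagram with $\mb{\bar c}\iprec \mb{\bar d}$.  Thus the two candidates agree at least in their local $\iprec$-configuration around the chords $c$ and $d$.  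An analogous computation, starting from the slide of $\mb{d}$ along $\mb{c}$ in $C$ (whose output satisfies $\mb{d}\iprec \mb{\bar c}$, dualizing to $\mb{c}\iprec \mb{d}$ in $\widehat{C'}$), establishes the local agreement for the second statement against the slide of $\mb{c}$ along $\mb{\bar d}$ in $\widehat C$.

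To promote this local $\iprec$-agreement to a full identification of marked chord diagrams, I would appeal to Lemma \ref{lem:unique}: a chord diagram is uniquely determined by the set of $\pi_1$-markings of its chords, up to permutation and inversion.  Since a chord slide is an essentially local move that modifies only a single chord's $\pi_1$-marking---by one of the six formulas $\mb{c}\mb{d}, \mb{\bar c}\mb{d}, \mb{d}\mb{c}, \mb{\bar d}\mb{c}, \mb{c}\mb{\bar d}, \mb{d}\mb{\bar c}$ enumerated in Section 4---it suffices to verify that this update coincides under both slides.  The main obstacle is the bookkeeping across the six cases of Figure \ref{fig:CsC}, since duality may flip preferred orientations and re-index which case applies in $\widehat C$ versus in $C$; however, the six formulas pair naturally under this orientation-flipping symmetry, so each case reduces to a short group-theoretic identity, after which the uniqueness from Lemma \ref{lem:unique} forces the two diagrams to be equal globally.
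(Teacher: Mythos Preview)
Your first two paragraphs are headed in exactly the right direction, and in fact they already contain everything the paper uses: the paper simply declares that this lemma ``follows immediately from the previous one.''  The point is that the previous lemma sets up a \emph{bijection} between all $\iprec$-adjacencies of $C$ and all $\iprec$-adjacencies of $\widehat C$ (namely $\mb{x}\iprec_C\mb{y}\Leftrightarrow \mb{\bar y}\iprec_{\widehat C}\mb{x}$), and a chord slide is nothing but a local rearrangement of adjacencies.  Concretely, if $\mb{a}\iprec\mb{c}\iprec\mb{d}$ and $\mb{\bar d}\iprec\mb{b}$ in $C$, then sliding $\mb{c}$ along $\mb{d}$ replaces these three adjacencies by $\mb{a}\iprec\mb{d}$ and $\mb{\bar d}\iprec\mb{c}\iprec\mb{b}$, leaving all others untouched.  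Pushing each of these through the bijection, one finds that in $\widehat C$ the adjacencies $\mb{\bar b}\iprec\mb{\bar d}\iprec\mb{c}$ and $\mb{\bar c}\iprec\mb{a}$ are replaced by $\mb{\bar b}\iprec\mb{c}$ and $\mb{\bar c}\iprec\mb{\bar d}\iprec\mb{a}$, again with all others untouched.  That is precisely the slide of the single endpoint $\mb{\bar d}$ along $\mb{c}$.  No further ingredient is needed.

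Your third paragraph, by contrast, introduces real problems rather than closing a gap.  First, Lemma~\ref{lem:unique} applies to bordered chord diagrams carrying a geometric $\pi_1$-marking, but at this point in the paper we do not yet know that $\widehat C$ is bordered---that is the content of the \emph{next} proposition, whose proof invokes the very lemma you are proving.  Second, the natural labelling that $\widehat C$ inherits from $C$ is the \emph{dual} marking of Observation~\ref{obs:o1}, which explicitly fails the vertex compatibility condition; so the hypotheses of Lemma~\ref{lem:unique} are not met, and the six formulas from Section~4 (which describe how \emph{ordinary} markings evolve) are not the right bookkeeping device here.  The detour through markings is therefore both unnecessary and circular.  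The fix is simply to sharpen your second paragraph: rather than checking only the single adjacency between $c$ and $d$, track the full list of changed $\iprec$-relations through the bijection as above, and observe that exactly one endpoint of $\widehat C$ moves.
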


\begin{proposition}
The dual chord diagram $\widehat C$ of a genus $g$ bordered chord diagram $C$ is again a genus $g$ bordered chord diagram.
\end{proposition}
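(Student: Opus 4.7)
The plan is to verify that $\widehat C$ has the correct Euler characteristic and exactly one boundary cycle, from which the genus assertion follows. Since $\widehat C$ has, by definition, the same set of oriented chords as $C$, it has the same number $k = 2g$ of chords. A linear chord diagram with $k$ chords has $2k$ vertices (the univalent tail vertex together with $2k-1$ trivalent chord-endpoint vertices) and $3k-1$ edges (the tail, $2k-2$ internal core segments, and $k$ chord edges, the rightmost merged with the segment $[2k-1, 2k]$), so $\chi(\widehat C) = 2k - (3k-1) = 1 - 2g$. Combined with the surface Euler relation $\chi = 2 - 2g' - n'$, it suffices to show $\widehat C$ has $n' = 1$ boundary cycle; this then forces $g' = g$.

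To show the single boundary cycle claim, I would trace the boundary cycle of $\widehat C$ explicitly. Under the standard planar convention (chords drawn above the core), at each trivalent vertex of $\widehat C$ the three incoming edges occur in the counterclockwise cyclic order (right-core, chord, left-core). The boundary cycle therefore starts at the univalent vertex, traverses the tail to vertex $1$, moves right through vertices $1, 2, \ldots, 2k-1$ (since entering from left-core at each vertex forces an exit via right-core), and exits vertex $2k-1$ via the merged chord to the vertex at position $p(2k)$, where $p$ denotes the chord pairing of $\widehat C$. From there the cycle alternates leftward core traversals with upward chord traversals, so that the sequence of chord-exit positions is governed by the recursion $e_0 = 2k$ and $e_{n+1} = p(e_n) - 1$, terminating precisely when $p(e_n) = 1$, at which point the cycle returns to the univalent vertex along the tail.

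The crucial step is to show that this sequence visits every position in $\{1, \ldots, 2k\}$ exactly once before terminating. Equivalently, the extension $\tau$ of the map $i \mapsto p(i) - 1$ by $\tau(0) = 2k$ must be a single $(2k+1)$-cycle on $\{0, 1, \ldots, 2k\}$. My plan is to deduce this from the corresponding single-cycle property of $\tau_C$ (which holds because $C$ is bordered) by invoking the preceding lemma $\mathbf{c} \iprec \mathbf{d} \iff \mathbf{\bar d} \ipred \mathbf{c}$. This lemma identifies how the pairing $p$ of $\widehat C$ is obtained from the pairing of $C$ via a permutation closely tied to the $<$-order, and the resulting bookkeeping gives a bijective correspondence between the $\tau_C$-orbit of $0$ and the $\tau_{\widehat C}$-orbit of $0$.

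Once the single-cycle property is established, the boundary cycle containing the tail visits all $2k$ chord orientations and hence accounts for all chord edges of $\widehat C$. A hypothetical additional boundary cycle would then consist solely of non-chord edges, but since the core of $\widehat C$ is a linear interval with no closed loops, no such cycle can form. Hence $\widehat C$ has exactly one boundary cycle and is a genus $g$ bordered chord diagram. The main obstacle I anticipate is the combinatorial verification that the orbit structure is transferred between $C$ and $\widehat C$; I expect this to follow directly from the preceding lemma combined with the explicit form of the recursion for $e_n$.
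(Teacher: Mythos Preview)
Your approach is genuinely different from the paper's, and the comparison is instructive. The paper does not attempt any direct boundary-cycle analysis. Instead, it argues by connectedness: one verifies the claim for a single explicit bordered chord diagram, and then uses two previously established facts --- that chord slides generate $\Cs$ (so any two genus $g$ bordered chord diagrams are linked by a sequence of chord slides) and that a chord slide on $C$ corresponds to a chord slide on $\widehat C$ (the lemma immediately preceding the proposition). Since chord slides preserve both the number of boundary cycles and the genus, the property ``$\widehat C$ is a genus $g$ bordered chord diagram'' is invariant along any such sequence, and checking it at one diagram suffices. This is a two-line argument that leverages the machinery already in place.

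Your approach is a direct combinatorial attack: compute $\chi$ and then show the boundary cycle of $\widehat C$ is a single cycle by analyzing the recursion $e_{n+1}=p(e_n)-1$. The Euler characteristic computation and the boundary-cycle tracing are fine. However, the step you flag as the ``main obstacle'' --- transferring the single-cycle property of $\tau_C$ to $\tau_{\widehat C}$ via the lemma $\mb{c}\iprec\mb{d}\iff\mb{\bar d}\ipred\mb{c}$ --- is the entire content of the proposition, and you have not carried it out. Concretely, if $\phi$ denotes the bijection from $\prec$-positions to $<$-positions, the lemma gives $\phi(i)-1=\phi(q(i+1))$ for $1\le i\le 2k-1$ (where $q$ is the chord pairing of $C$), and you would need to combine this with $p=\phi q\phi^{-1}$ and the boundary cases at positions $0$ and $2k$ to conclude that the extended permutation $\tau_{\widehat C}$ is conjugate to a single $(2k{+}1)$-cycle. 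This is doable but requires careful bookkeeping at the endpoints, and ``I expect this to follow'' does not discharge it. The paper's invariance argument sidesteps this computation entirely; if you want a self-contained proof that does not invoke Theorem~\ref{thm:generate}, you should complete the permutation calculation explicitly.
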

\begin{proof}
The proposition can easily be verified for any one particular bordered chord diagram $C$.  Since the genus is preserved under chord slides, the proposition follows from the previous lemma and the fact that chord slides generate $\Cs$.
\end{proof}

\begin{observation}\label{obs:o1}
If $C$ is a marked bordered chord diagram,  we automatically obtain a map $\hat \pi_1\colon \mathcal{E}_{or}(X_{\widehat C})\ra \pi_1$  on its dual $\widehat C$ by setting $\hat \pi_1(\mb{x})=\pi_1(\mb{x})$.   We call this map a \emph{dual marking} of $\widehat C$.  Similarly, we define a dual geometric $H$ marking $\hat H\colon \mathcal{E}_{or}(X_{\widehat C})\ra H$ of $\widehat C$ by composing $\hat \pi_1$ with the abelianization map.  Dual markings behave rather differently than usual markings.  In fact, dual markings satisfy the orientation compatibility condition, but do not satisfy the vertex compatibility condition.   Also, contrary  to the usual situation, a chord slide of a chord end  $\mb{c}$ along $\mb{d}$ in $\widehat C$ alters the dual marking of the chord $c$ but leaves the dual marking of $d$ fixed.    
\end{observation}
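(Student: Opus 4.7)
The observation bundles three independent claims about $\hat\pi_1$, and my plan is to verify each by reducing it to a fact about $C$ already established. Orientation compatibility is immediate: since $\hat\pi_1(\mb{x})=\pi_1(\mb{x})$ on every oriented chord end by definition, one has $\hat\pi_1(\mb{x})\hat\pi_1(\mb{\bar x})=\pi_1(\mb{x})\pi_1(\mb{\bar x})=1$ by the orientation compatibility of the original marking of $C$.

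For the failure of the vertex compatibility condition, I would simply exhibit a small explicit bordered chord diagram — say any genus one example with two linked chords — and compute the product of oriented chord ends around one non-tail vertex of $\widehat C$. The structural point is that vertices of $\widehat C$ are organized by the ordering $<$, so their cyclic sequences of oriented chord ends bear no relation to the sequences at vertices of $C$ on which $\pi_1$ is forced to multiply to the identity; the explicit computation then confirms that no accidental cancellation rescues the identity.

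For the chord-slide statement, I would invoke the preceding lemma identifying slides between $C$ and $\widehat C$. Under the relabeling given there, a slide of a chord end $\mb{a}$ along $\mb{b}$ in $\widehat C$ corresponds to a slide in $C$ in which chord $b$ is the sliding chord and chord $a$ is the one slid along. By the marking-evolution rule for $C$ recalled after Observation \ref{obs:WCS} — the chord slid along absorbs an adjacent generator, while the sliding chord keeps its marking — the $\pi_1$-marking of $a$ changes in $C$ and that of $b$ stays fixed. Since $\hat\pi_1$ is by definition the restriction of $\pi_1$ to the oriented chord ends surviving in $\widehat C$, these evolutions transfer verbatim: the dual marking of $a$ changes while that of $b$ is unchanged. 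This is the exact reversal of the rule in $C$ because in $\widehat C$ it is the end of $a$ that was slid, which gives the promised switched behavior.

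The main obstacle is the orientation bookkeeping across the swap $\iprec\leftrightarrow\ipred$. One has to identify precisely which end of $a$ and which end of $b$ become paired under the duality, so that the factor multiplied onto the altered chord in $C$ matches one of the six explicit cases of Figure \ref{fig:CsC}. This matching is mechanical but must be done case by case, and there is nothing preventing it from working out — it just requires care.
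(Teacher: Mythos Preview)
Your argument is correct. The paper presents this statement as an observation without proof, and your verification supplies exactly the reasoning the paper leaves implicit: orientation compatibility is inherited trivially, vertex compatibility fails because the vertex structure of $\widehat C$ is governed by $<$ rather than $\prec$, and the chord-slide claim follows by translating a slide in $\widehat C$ back to a slide in $C$ via the preceding lemma and then reading off which chord's $\pi_1$-marking changes. Your case check (matching $\mb{a}=\mb{\bar d}$, $\mb{b}=\mb{c}$ or $\mb{a}=\mb{c}$, $\mb{b}=\mb{\bar d}$ against the two halves of that lemma) is exactly the bookkeeping needed, and it works out as you indicate.
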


\begin{theorem}
The mapping class group $\M1g$ has a presentation with generators given by sequences of chord slides on  genus $g$ dual-marked bordered chord diagrams beginning and ending at isomorphic diagrams and relations given by compositions of $T$, $I$, $C$, $L$, and $R$.
\end{theorem}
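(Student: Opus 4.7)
The plan is to transport the presentation of $\M1g$ already established in terms of marked bordered chord diagrams through the duality operation $C\mapsto \widehat C$. More precisely, I would argue that the duality sets up an isomorphism between the groupoid of chord slides on marked bordered chord diagrams and the groupoid of chord slides on dual-marked bordered chord diagrams, and that this isomorphism carries each of the five relation types to a relation of one of the same five types; the theorem then follows from the second-to-last theorem of the previous section.

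First, I would assemble the groupoid isomorphism. Let $\widehat{\Cs}$ denote the category whose objects are dual-marked genus $g$ bordered chord diagrams and whose morphisms are sequences of chord slides modulo the five relations. By the proposition preceding the theorem, $C\mapsto \widehat C$ is a bijection on objects from $\Cs$ to $\widehat{\Cs}$, sending the $\pi_1$-marking of $C$ to the dual marking $\hat\pi_1$ on $\widehat C$. By the lemma comparing slides in $C$ and in $\widehat C$, the map which sends the slide of $\mb c$ along $\mb d$ in $C$ (with $\mb c\iprec\mb d$) to the slide of $\mb{\bar d}$ along $\mb c$ in $\widehat C$ and the slide of $\mb d$ along $\mb c$ in $C$ to the slide of $\mb c$ along $\mb{\bar d}$ in $\widehat C$ is a bijection between the chord-slide generators on either side. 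Since these are the generators of the respective groupoids, this extends to a well-defined functor at the level of free groupoids on chord slides.

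Second, I would verify that each of the relations $T, I, C, L, R$ in $\Cs$ maps under this correspondence to a relation in $\widehat{\Cs}$ that is again one of $T, I, C, L, R$. This is a purely local computation: each relation involves only the small portion of the diagram containing the two (or three) chords participating in it, and the duality acts locally at this level. Involutivity $I$ is immediate (the inverse of a slide is always a slide, and duality preserves inverses). For disjoint commutativity $C$, the two pairs of chord ends being acted on are disjoint in $C$, hence remain disjoint in $\widehat C$. For the triangle relation $T$, the three-cycle of slides between two chords in $C$ maps, under the explicit correspondence above, to a three-cycle of slides between the same two chords in $\widehat C$, reading off as a $T$-relation after tracking the preferred orientations. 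For the pentagon relations $L$ and $R$, a case analysis on the local configuration of the three chords involved shows that duality maps $L$-configurations to $R$-configurations and vice versa (or back to themselves, depending on orientation); in either case, the image is one of $L$ or $R$.

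The main obstacle will be this last bookkeeping step for the pentagon relations, because $\iprec$ and $\ipred$ are interchanged only up to flipping orientations (as encoded in the lemma $\mb c\iprec\mb d\iff \mb{\bar d}\ipred \mb c$), and the dual operation is not an involution on chord diagrams. Consequently, one must carefully track preferred orientations and the positions of the three chords in each $L$ and $R$ picture to confirm the image lies again in the union of the $L$ and $R$ families rather than producing some new relation. Once this is checked, the presentation transports verbatim: the generators of $\M1g$ as sequences of slides on dual-marked diagrams beginning and ending at isomorphic diagrams correspond, via $C\mapsto\widehat C$, to the previously-established generators of $\M1g$ as sequences of slides on marked chord diagrams, and the relations on either side are generated by $T, I, C, L, R$.
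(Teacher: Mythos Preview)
Your proposal is correct and follows essentially the same approach as the paper: transport the presentation through duality using the lemma relating slides in $C$ and $\widehat C$, and check that each relation type maps to one of the five relation types. The paper states the precise correspondence under duality as $T\mapsto T$, $I\mapsto I$, $C\mapsto C$, $L\mapsto R$, $R\mapsto L$ (and, for the auxiliary relations, $O\leftrightarrow A$), so your hedging ``or back to themselves'' is unnecessary---$L$ and $R$ swap.
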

\begin{proof}
Using the previous  lemma, one can show that  the relations $T$, $I$, $C$, $L$, $R$, $O$, and $A$ are dual to $T$, $I$, $C$, $R$, $L$, $A$, and $O$ respectively.  The theorem then follows. 
\end{proof}

\begin{observation}\label{obs:o2}
One nice feature of dual chord diagrams not shared by ordinary chord diagrams is that the symplectic pairing can be easily read off from the diagram.  In particular,
$
\langle\mb{x},\mb{y}\rangle =-\sum_{p\in x\cap y} \varepsilon(\mb{x},\mb{y},p)
$, 
 where the sum is over all crossing points of the two chords $x$ and $y$, and  $\varepsilon(\mb{x},\mb{y},p)$ takes the values of plus or minus one, depending on whether or not the direction along $\mb{x}$ followed by the direction along  $\mb{y}$ gives an oriented basis for the plane at $p$. 
\end{observation}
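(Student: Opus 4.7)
The strategy is to interpret $\sum_{p\in x\cap y}\varepsilon(\mb{x},\mb{y},p)$ as an algebraic intersection number of oriented arcs in the upper half-plane, and then reduce to a finite combinatorial check.

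First I would establish that the signed sum is invariant under regular homotopy of each chord in the upper half-plane (keeping the four endpoints fixed on the core). A generic local move that creates or destroys a pair of intersection points produces two crossings of opposite signs, because the two transversal intersections born from a tangency inherit opposite orientations of the ordered frame $(\mb{x},\mb{y})$. Consequently, I may replace each immersed chord with its canonical embedded representative, namely a smooth simple arc above the core joining its two endpoints, without changing the value of $\sum_{p} \varepsilon(\mb{x},\mb{y},p)$.

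Next I would case-split on the relative arrangement of the four endpoints $\mb{x},\mb{\bar x},\mb{y},\mb{\bar y}$ along the core of $\widehat C$. If the endpoints are disjoint or nested, then the embedded representatives do not meet and the sum vanishes, matching $\langle\mb{x},\mb{y}\rangle = 0$. If the endpoints interleave, the embedded representatives cross in exactly one point $p$, and its sign must be computed. In the dual chord diagram, both chords under their preferred orientations run from left to right above the core, so at $p$ the frame $(\mb{x},\mb{y})$ is either positively or negatively oriented depending on which of the two linked arrangements $\mb{x}<\mb{y}<\mb{\bar x}<\mb{\bar y}$ or $\mb{y}<\mb{x}<\mb{\bar y}<\mb{\bar x}$ (up to cyclic permutation) occurs. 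I would verify by inspection of the two pictures that the resulting value of $-\varepsilon(\mb{x},\mb{y},p)$ agrees with the $\pm 1$ prescribed by the definition of the pairing.

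The main obstacle is the sign bookkeeping. Beyond carefully tracking the two opposite signs in the two linked configurations, one must unambiguously interpret the definition of $\langle\,,\,\rangle$ as it appears in the text (whose statement of the $+1$ case contains a typographical error reproducing the $-1$ case), and ensure that the overall minus sign in the formula $\langle\mb{x},\mb{y}\rangle=-\sum_{p} \varepsilon(\mb{x},\mb{y},p)$ is correctly accounted for. Once the correct convention is fixed, the computation in each of the two linked subcases is a direct geometric inspection, and the disjoint/nested subcases are immediate.
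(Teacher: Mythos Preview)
The paper states Observation~\ref{obs:o2} without proof; it is offered as a remark, not accompanied by an argument. Your proposal therefore cannot be compared against a proof in the paper, but it is a sound way to verify the claim.

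Your two-step approach---first arguing homotopy invariance of the signed crossing number so that immersed chords may be replaced by embedded arcs, then reducing to a finite case analysis on the relative position of the four endpoints along the core---is correct and is essentially the standard verification that algebraic intersection number is well-defined and computable from endpoint linking data. The key point that makes this work specifically for the \emph{dual} diagram (and not for an ordinary chord diagram) is exactly the one you identify: in $\widehat C$ the left-to-right order along the core is the order $<$, which is the order appearing in the definition of $\langle\,,\,\rangle$, so ``linked endpoints on the core'' coincides with the condition $\mb{x}<\mb{y}<\mb{\bar x}<\mb{\bar y}$ (up to cyclic permutation). You are also right that the paper's stated $+1$ case is a typographical repetition of the $-1$ case; the intended $+1$ case is the one with the roles of $\mb{x}$ and $\mb{y}$ swapped, and with that reading your sign check in the two linked configurations goes through.

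One small sharpening: the homotopy-invariance step, while correct, is slightly heavier machinery than needed. Since the chords of $\widehat C$ are by definition just immersed segments with prescribed endpoints, and the claim concerns a fixed pair of such segments, you could simply observe that any two generic immersions with the same endpoints differ by finitely many Reidemeister~II moves in the upper half-plane, each of which changes the signed count by zero. This is what you sketch, and it suffices.
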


\subsection{An integral algorithm}

In \cite{abp}, a simple  algorithm was provided to transform what was called a geometric basis for $H$ into a symplectic one. A priori, this algorithm only worked over the rational numbers, but here we give a proof that it is in fact integral.  We begin by recalling this algorithm.  

Recall that a geometric basis $X=\{X_1,\ldots,X_{2g}\}$ for $H$ is an ordered basis obtained from a geometrically $H$-marked bordered fatgraph by taking the set of $H$-markings of the generators,  $H(\mb{X}_G)=\{X_1,\ldots,X_{2g}\}$ with $X_i=H(\mb{x}_i)$ for $\mb{x}_i\in \mb{X}_G$.   
  Equivalently, one could take the dual $H$-markings of the chords of a dual bordered chord diagram, again ordered by $<$.
Such a basis has the property that all mutual intersection pairings are $\pm 1$ or zero.  

The algorithm in question builds a symplectic basis of the form $\{A_1,B_1,A_2,\ldots, B_g\}$ with $A_i\cdot B_j=\delta_{ij}$  out of $X=\{X_1,\ldots, X_{2g}\}$ by reiterating the following procedure.  Assume that the first $2k$ elements of  $X$ satisfy the requirements on their mutual intersection pairings.  Then let $A_{k+1}=X_{2k+1}$ and let $i>2k+1$ be minimal so that $X_{2k+1}\cdot X_{i}\neq 0$.  Then set  $B_{k+1}=X_i/(X_{2k+1}\cdot X_i)$ and rearrange the elements  so that $B_{k+1}$ comes immediately after $A_{k+1}$.  Finally,  modify the remaining basis elements by 
\begin{equation} \label{eq:finalstep}
X_j\mapsto X_j-(X_j\cdot B_{k+1}) A_{k+1} + (X_j\cdot A_{k+1}) B_{k+1}. 
\end{equation}

As already mentioned, this algorithm a priori only works over the rational numbers due to the step which divides $B_{k+1}$ by the  integer $(X_{2k+1}\cdot X_i)$.  We now give a dual chord slide interpretation of the algorithm which will show that it is in fact integral.

\begin{proposition}
The algorithm described above works over the integers.
\end{proposition}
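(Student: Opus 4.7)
The plan is to reinterpret each iteration of the algorithm as a finite sequence of dual chord slides on a dual-marked bordered chord diagram, thereby revealing that the only seemingly non-integral step, the division by $X_{2k+1}\cdot X_i$ in the definition of $B_{k+1}$, is in fact always division by $\pm 1$.

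First I would fix a genus $g$ dual-marked bordered chord diagram $\widehat C$ whose dual $H$-markings, listed in the order $<$, recover the geometric basis $X=\{X_1,\ldots,X_{2g}\}$. By Observation \ref{obs:o2} the symplectic pairing $X_i\cdot X_j$ is given by a signed count of intersections of the chords $x_i$ and $x_j$ in $\widehat C$. I would then argue, by induction on the number of dual chord slides needed to reach $\widehat C$ from a standard genus $g$ model diagram, that any two distinct chords of $\widehat C$ cross at most once; combined with Observation \ref{obs:o2}, this gives $X_i\cdot X_j\in\{-1,0,+1\}$, recovering the stated geometricity property of $X$.

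Next I would perform the $k$th iteration of the algorithm diagrammatically. Assume inductively that after $k$ iterations we have produced $A_1,B_1,\ldots,A_k,B_k$ together with a dual-marked bordered chord diagram whose chord markings, in the order $<$, are $\{X_{2k+1},\ldots,X_{2g}\}$. Setting $A_{k+1}=X_{2k+1}$ and choosing $i$ minimal with $X_{2k+1}\cdot X_i\neq 0$, the at-most-one-crossing property just established forces $X_{2k+1}\cdot X_i=\pm 1$, so $B_{k+1}=\pm X_i$ is already integral. I would then realize the replacement
\[
X_j\mapsto X_j-(X_j\cdot B_{k+1})\,A_{k+1}+(X_j\cdot A_{k+1})\,B_{k+1}
\]
as a prescribed sequence of dual chord slides of $x_j$ along $x_{2k+1}$ and $x_i$: by Observation \ref{obs:o1} each such slide modifies the dual $H$-marking of $x_j$ by exactly $\pm X_{2k+1}$ or $\pm X_i$ while leaving all other dual markings fixed. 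The resulting configuration is again an honest dual-marked bordered chord diagram, now realizing the algorithm's modified basis, and the inductive hypothesis is restored so that the algorithm proceeds over $\mathbb{Z}$.

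The main obstacle I anticipate is the bookkeeping required to implement $X_j\mapsto X_j\pm X_{2k+1}\pm X_i$ by legal dual chord slides: the relevant chord endpoints must be brought into adjacency in the sense of $\ipred$ before each slide, and the signs produced by the dualized slide formulas must be reconciled with those dictated by the pairings $(X_j\cdot A_{k+1})$ and $(X_j\cdot B_{k+1})$ in the algorithm. I expect this to be handled by a case analysis using the dual of Lemma \ref{lem:whichslides}, supplemented by auxiliary slides organized through the dualized $C$, $L$, and $R$ relations, whose net effect on dual $H$-markings is trivial.
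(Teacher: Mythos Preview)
Your overall strategy---realize each iteration of the algorithm by dual chord slides---is exactly the paper's strategy. Two remarks, one minor and one substantive.

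Minor: the inductive argument that distinct chords of $\widehat C$ cross at most once is unneeded. This is automatic for any linear chord diagram (two arcs with four distinct endpoints on a line either interleave or do not), and in any case the pairing $\langle\mathbf{x},\mathbf{y}\rangle$ is \emph{by definition} in $\{-1,0,1\}$, so the geometricity condition already gives $X_i\cdot X_j\in\{-1,0,1\}$. The division step is therefore integral with no further argument.

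Substantive: your plan to implement the replacement of each $X_j$ by sliding $x_j$ individually over $x_{2k+1}$ and $x_i$ has a genuine gap at precisely the point you flag. There are no ``auxiliary slides whose net effect on dual $H$-markings is trivial'': by Observation~\ref{obs:o1} every dual chord slide changes exactly one dual marking, so bringing an end of $x_j$ into $\ipred$-adjacency with an end of $x_{2k+1}$ or $x_i$ by moving intervening chords will alter those chords' markings, and once $x_j$ has been slid the configuration has changed so the auxiliary moves cannot simply be undone. The paper sidesteps this with a single organizing idea: take $\mathbf{c},\mathbf{d}$ to be the chords carrying $A_{k+1}$ and $\pm B_{k+1}$, normalized so that $\mathbf{c}<\mathbf{d}<\mathbf{\bar c}<\mathbf{\bar d}$, and slide \emph{every} chord endpoint $\mathbf{x}$ with $\mathbf{c}<\mathbf{x}<\mathbf{\bar d}$ out of that interval along $c$ or $d$. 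Because the endpoints are cleared in order along the core, each one is already adjacent to an end of $c$ or $d$ at the moment it is slid, so no auxiliary moves are ever required; and one then checks directly that the net effect on $\hat H(\mathbf{x})$ is $-\langle\mathbf{x},\mathbf{c}\rangle\,\hat H(\mathbf{d})+\langle\mathbf{x},\mathbf{d}\rangle\,\hat H(\mathbf{c})$, which is exactly the algorithm's formula. The paper also handles the reordering of the basis---placing $B_{k+1}$ immediately after $A_{k+1}$---by the separate observation that, once the region is cleared, the isolated symplectic pair $c,d$ can be slid along the core as a unit without changing any $H$-marking; your inductive setup needs this step but does not address it.
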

\begin{proof}
Since intersection pairings of oriented edges of a chord diagram are always $\pm 1$ or zero, the proposition will follow once we show that each step in the algorithm can be obtained by performing a sequence of chord slides and  reversals of chord orientations.  As already noted,  the changing of sign of the $H$-marking of a chord is obtained by changing the orientation of the chord.

Now consider a dual marked  dual chord diagram $\widehat C$.  Let $\mb{c}$ and $\mb{d}$ be chords of $\widehat C$ that cross, meaning the pairing $\langle \mb{c},\mb{d}\rangle$ is non-zero.  Without loss of generality, assume that we have $\mb{c}<\mb{d}<\mb{\bar c}<\mb{\bar d}$ so that $\langle \mb{c},\mb{d}\rangle=-1$.  We can then consider the sequence of chord slides which slide all chord endpoints $\mb{x}$ with $\mb{c}<\mb{x}<\mb{\bar d}$  out of the region between $\mb{c}$ and $\mb{\bar d}$ as illustrated in  figure \ref{fig:sympline}.

\begin{figure}[!h]
\begin{center}
\epsffile{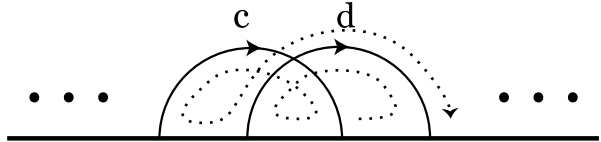}
\caption{Chord slides out of the region between  $\mb{c}$ and $\mb{\bar d}$.}
\label{fig:sympline}
\end{center}
\end{figure}

It is easy to see that the effect of this sequence of slides on the $\hat H$-marking  $\hat H(\mb{x})$ of \emph{any} oriented chord $\mb{x}$ is given by 
subtracting $\langle \mb{x}, \mb{c} \rangle\cdot  \hat H(\mb{d})$ and adding $ \langle \mb{x}, \mb{d} \rangle \cdot  \hat H(\mb{c})$. 
In particular, if we set $A_{k+1}=\hat H(\mb{c})$ and $B_{k+1}=\hat H(\mb{\bar d})$, so that $A_{k+1}\cdot B_{k+1} =1 $,  
then we recover Equation  \ref{eq:finalstep}.
Thus, we see that the final step of the algorithm can be obtained by chord slides.  

Finally, we have only to address the issue of reordering the generators.  But this issue is resolved by the observation  that any pair of chords  $\mb{c}$ and $\mb{d}$ of a dual chord diagram $\widehat C$ with $\mb{c}\ipred\mb{d}\ipred\mb{\bar c}\ipred \mb{\bar d}$ can be collectively repositioned  along the core of a diagram without changing the $H$-markings of any chord.   The proposition thus follows.
\end{proof}

{\it Acknowledgment: }
{\small The author owes a great debt to Jean-Baptiste Meilhan for many helpful discussions and comments, in particular for his recognition of Lemma 6.1, as well as  for his general encouragement in the writing up of these results. 
The author also gratefully thanks Robert Penner for similarly helpful discussions, comments, and encouragement.  }

\bibliographystyle{amsplain}

\end{document}